\theoremstyle{plain}
\newtheorem{theorem}{Theorem}[section]
\newtheorem{proposition}{Proposition}[section]
\theoremstyle{definition}
\theoremstyle{remark}
\newtheorem{remark}{Remark}[section]
\theoremstyle{example}
\newtheorem{example}{Example}[section]
\renewcommand{\r}{\rho}
\newcommand{\g}{\gamma}
\renewcommand{\t}{\theta}
\newcommand{\e}{\varepsilon}
\newcommand{\s}{\sigma}
\renewcommand{\div}{\text{div}}
\renewcommand{\O}{\Omega}
\newcommand{\G}{\Gamma}
\renewcommand{\k}{\kappa}
\renewcommand{\l}{\lambda}
\renewcommand{\d}{\partial}
\renewcommand{\a}{\alpha}
\renewcommand{\b}{\beta}
\newcommand{\n}{{\bf n}}
\newcommand{\R}{{\mathbb R}}
\def\be{\begin{equation}}
\def\ee{\end{equation}}
\def\bes{\begin{equation*}}
\def\ees{\end{equation*}}
\def\bc{\begin{cases}}
\def\ec{\end{cases}}
\numberwithin{equation}{section}
\begin{document}

\title[Isometric Immersions and Compensated Compactness]
{Isometric Immersions and Compensated Compactness}
\author{Gui-Qiang Chen \and Marshall Slemrod \and Dehua Wang}
\address{G.-Q. Chen, Department of Mathematics, Northwestern University,
         Evanston, IL 60208.}
\email{\tt gqchen@math.northwestern.edu}

\address{M. Slemrod, Department of Mathematics, University of Wisconsin,
Madison, WI 53706.} \email{\tt slemrod@math.wisc.edu}

\address{D. Wang, Department of Mathematics, University of Pittsburgh,
                Pittsburgh, PA 15260.}
\email{\tt dwang@math.pitt.edu}

\keywords{} \subjclass[2000]{Primary: 53C42, 53C21, 53C45, 58J32,
35L65, 35M10, 35B35; Secondary: 53C24, 57R40, 57R42, 76H05, 76N10}
\date{\today}
\thanks{}

\begin{abstract}
A fundamental problem in differential geometry is to characterize
intrinsic metrics on a two-dimensional Riemannian manifold
${\mathcal M}^2$ which can be realized as isometric immersions into
$\R^3$. This problem can be formulated as initial and/or boundary
value problems for a system of nonlinear partial differential
equations of mixed elliptic-hyperbolic type whose mathematical
theory is largely incomplete. In this paper, we develop a general
approach, which combines a fluid dynamic formulation of balance laws
for the Gauss-Codazzi system with a compensated compactness
framework, to deal with the initial and/or boundary value problems
for isometric immersions in $\R^3$. The compensated compactness
framework formed here is a natural formulation to ensure the weak
continuity of the Gauss-Codazzi system for approximate solutions,
which yields the isometric realization of two-dimensional surfaces
in $\R^3$.

As a first application of this approach, we study the isometric
immersion problem for two-dimensional Riemannian manifolds with
strictly negative Gauss curvature. We prove that
there exists a $C^{1,1}$ isometric immersion of the two-dimensional
manifold in $\R^3$ satisfying our prescribed initial conditions. To
achieve this, we introduce a vanishing viscosity method depending on
the features of initial value problems for isometric immersions and
present a technique to make the apriori estimates including the
$L^\infty$ control and $H^{-1}$--compactness for the viscous
approximate solutions. This yields the weak convergence of the
vanishing viscosity approximate solutions and the weak continuity of
the Gauss-Codazzi system for the approximate solutions, hence the
existence of an isometric immersion of the manifold into $\R^3$
satisfying our initial conditions.
\end{abstract}

\maketitle

\section{Introduction}

A fundamental problem in differential geometry is to characterize
intrinsic metrics on a two-dimensional Riemannian manifold
${\mathcal M}^2$ which can be realized as isometric immersions into
$\R^3$ (cf. Yau \cite{Yau00}; also see \cite{HanHong,PS,Roz}).
Important results have been achieved for the embedding of surfaces
with positive Gauss curvature which can be formulated as an elliptic
boundary value problem (cf. \cite{HanHong}).
For the case of surfaces of negative Gauss curvature where the
underlying partial differential equations are hyperbolic, the
complimentary problem would be an initial or initial-boundary value
problem.
%
%
Hong in \cite{Hong93} first proved that complete negatively curved
surfaces can be isometrically immersed in $\R^3$ if the Gauss
curvature decays at certain rate in the time-like direction.
In fact, a crucial lemma in Hong \cite{Hong93} (also see Lemma
10.2.9 in \cite{HanHong}) shows that, for such a decay rate of the
negative Gauss curvature, there exists a unique global smooth, small
solution forward in time for prescribed smooth, small initial data.
Our main theorem, Theorem 5.1(i), indicates that in fact we can
solve the corresponding problem for a class of {\it large}
non-smooth initial data.
Possible implication of our approach may be in existence theorems
for equilibrium configurations of a catenoidal shell as detailed in
Vaziri-Mahedevan \cite{VM}. When the Gauss curvature changes sign,
the immersion problem then becomes an initial-boundary value problem
of mixed elliptic-hyperbolic type, which is still under
investigation.

The purpose of this paper is to introduce a general approach, which
combines a fluid dynamic formulation of balance laws with a
compensated compactness framework, to deal with the isometric
immersion problem in $\R^3$ (even when the Gauss curvature changes
sign). In Section 2, we formulate the isometric immersion problem
for two-dimensional Riemannian manifolds in $\R^3$ via solvability
of the Gauss-Codazzi system. In Section 3, we introduce a fluid
dynamic formulation of balance laws for the Gauss-Codazzi system for
isometric immersions. Then, in Section 4, we form a compensated
compactness framework and present one of our main observations that
this framework is a natural formulation to ensure the weak
continuity of the Gauss-Codazzi system for approximate solutions,
which yields the isometric realization of two-dimensional surfaces
in $\R^3$.

As a first application of this approach, in Section 5, we focus on
the isometric immersion problem of two-dimensional Riemannian
manifolds with strictly negative Gauss curvature. Since the local
existence of smooth solutions follows from the standard hyperbolic
theory, we are concerned here with the global existence of solutions
of the initial value problem with large initial data.
The metrics $g_{ij}$ we study have special structures and forms
usually associated with

(i) the catenoid of revolution when $g_{11}=g_{22}=cosh(x)$ and
$g_{12}=0$;

(ii) the helicoid when $g_{11}=\lambda^2+y^2$, $g_{22}=1$, and
$g_{12}=0$.

\noindent For these cases, while Hong's theorem \cite{Hong93}
applies to obtain the existence of a solution for small smooth
initial data, our result yields a large-data existence theorem for a
$C^{1,1}$ isometric immersion.

%
%

To achieve this, we introduce a vanishing viscosity method depending
on the features of the initial value problem for isometric
immersions and present a technique to make the apriori estimates
including the $L^\infty$ control and $H^{-1}$--compactness for the
viscous approximate solutions. This yields the weak convergence of
the vanishing viscosity approximate solutions and the weak
continuity of the Gauss-Codazzi system for the approximate
solutions, hence the existence of a $C^{1,1}$--isometric immersion
of the manifold into $\R^3$ with prescribed initial conditions.

We remark in passing that, for the fundamental ideas and early
applications of compensated compactness, see the classical papers by
Tartar \cite{Ta1} and Murat \cite{Mu2}. For applications to the
theory of hyperbolic conservation laws, see for example
\cite{Chen2,Dafermos-book,DiPerna3,evans,Serre}. In particular, the
compensated compactness approach has been applied in
\cite{Chen1,CLF,DCL,DiPerna1,LPS,LPT} to the one-dimensional Euler
equations for unsteady isentropic flow, allowing for cavitation, in
Morawetz \cite{Mor85,Mor95} and Chen-Slemrod-Wang \cite{CSW} for
two-dimensional steady transonic flow away from stagnation points,
and in Chen-Dafermos-Slemrod-Wang \cite{CDSW} for subsonic-sonic
flows.


\section{The Isometric Immersion Problem for Two-Dimensional Riemannian
Manifolds in $\R^3$}

In this section, we formulate the isometric immersion problem for
two-dimensional Riemannian manifolds in $\R^3$ via solvability of
the Gauss-Codazzi system.

Let $\O\subset\R^2$ be an open set. Consider a map ${\bf r}:
\O\to\R^3$ so that, for $(x,y)\in\O$, the two vectors $\{\d_x{\bf
r}, \d_y{\bf r}\}$ in $\R^3$ span the tangent plane at ${\bf
r}(x,y)$ of the surface ${\bf r}(\O)\subset\R^3$. Then
$$
\n=\frac{\d_x{\bf
r}\times\d_y{\bf r}} {|\d_x{\bf r}\times\d_y{\bf r}|}
$$
is the unit normal of the surface ${\bf r}(\O)\subset\R^3$. The
metric on the surface in $\R^3$ is
\begin{equation}\label{2.1}
ds^2=d{\bf r}\cdot d{\bf r}
\end{equation} or, in local
$(x,y)$--coordinates,
\begin{equation}\label{2.2}
ds^2
    =(\d_x{\bf r}\cdot \d_x{\bf r})\,(dx)^2
     +2(\d_x{\bf r}\cdot\d_y{\bf r})\, dxdy
     +(\d_y{\bf r}\cdot \d_y{\bf r})\, (dy)^2.
\end{equation}

Let $g_{ij}, i, j=1,2,$ be the given metric of a two-dimensional
Riemannian manifold $\mathcal{M}$ parameterized on $\Omega$. The
first fundamental form $I$ for $\mathcal{M}$ on $\Omega$ is
\begin{equation}\label{2.3}
I:=g_{11}(dx)^2+2 g_{12}dxdy +g_{22}(dy)^2.
\end{equation}
Then {\em the isometric immersion problem} is to {\it seek a map
$\mathbf{r}: \Omega\to \R^3$ such that
$$
d\mathbf{r}\cdot d\mathbf{r}=I,
$$
that is,
\begin{equation} \label{2.4}
\partial_x{\bf r}\cdot\partial_x{\bf r}=g_{11},\quad
    \partial_x{\bf r}\cdot\partial_y{\bf r}=g_{12},\quad
   \partial_y{\bf r}\cdot\partial_y{\bf r}=g_{22},
\end{equation}
so that $\{\partial_x{\bf r}, \partial_y{\bf r}\}$ in $\R^3$ are
linearly independent}.

The equations in \eqref{2.4} are three nonlinear partial
differential equations for the three components of ${\bf r}(x,y)$.
%

The corresponding second fundamental form is
\begin{equation}\label{2.5}
I\!I:=-d\n\cdot d{\bf r}=h_{11}(dx)^2+ 2h_{12}dxdy + h_{22}(dy)^2,
\end{equation}
and $(h_{ij})_{1\le i,j\le 2}$ is the orthogonality of $\n$ to the
tangent plane. Since $\n\cdot d{\bf r}=0$, then $d(\n\cdot d{\bf
r})=0$ implies
$$
-I\!I+\n\cdot d^2{\bf r}=0, \quad i.e., \quad
  I\!I=(\n\cdot\partial_x^2{\bf r})\, (dx)^2
+2(\n\cdot \partial_{xy}^2{\bf r})\, dxdy+ (\n\cdot\partial_y^2{\bf
r})\, (dy)^2.
$$


The fundamental theorem of surface theory (cf.
\cite{doC1992,HanHong}) indicates that {\it there exists a surface
in $\R^3$ whose first and second fundamental forms are $I$ and
$I\!I$ if the coefficients $(g_{ij})$ and $(h_{ij})$ of the two
given quadratic forms $I$ and $I\!I$ with $(g_{ij})>0$
satisfy the Gauss-Codazzi system}. It is indicated in Mardare
\cite{Mardare2} (Theorem 9; also see \cite{Mardare1}) that this
theorem holds even when $(h_{ij})$ is only in $L^\infty$ for given
$(g_{ij})$ in $C^{1,1}$, for which the immersion surface is
$C^{1,1}$. This shows that, for the realization of a two-dimensional
Riemannian manifold in $\R^3$ with given metric $(g_{ij})>0$, it
suffices to solve $(h_{ij})\in L^\infty$ determined by the
Gauss-Codazzi system to recover ${\bf r}$ a posteriori.


The simplest way to write the Gauss-Codazzi system (cf.
\cite{doC1992,HanHong}) is as
\begin{equation} \label{g1}
\begin{split}
\d_x{M}-\d_y{L}&=\G^{(2)}_{22}L-2\G^{(2)}_{12}M+\G^{(2)}_{11}N, \\
\d_x{N}-\d_y{M}&=-\G^{(1)}_{22}L+2\G^{(1)}_{12}M-\G^{(1)}_{11}N,
\end{split}
\end{equation}
with
\begin{equation}\label{g2}
LN-M^2=\k.
\end{equation}
Here
$$
L=\frac{h_{11}}{\sqrt{|g|}}, \qquad M=\frac{h_{12}}{\sqrt{|g|}},
\qquad N=\frac{h_{22}}{\sqrt{|g|}},
$$
$|g|=det(g_{ij})=g_{11}g_{22}-g_{12}^2$, $\kappa(x,y)$ is the Gauss
curvature that is determined by the relation:
$$
\k(x,y)=\frac{R_{1212}}{|g|}, \qquad
R_{ijkl}=g_{lm}\left(\d_k\G^{(m)}_{ij}-\d_j\G^{(m)}_{ik}
+\G^{(n)}_{ij}\G^{(m)}_{nk}-\G^{(n)}_{ik}\G^{(m)}_{nj}\right),
$$
$R_{ijkl}$ is the curvature tensor and depends on $(g_{ij})$ and its
first and second derivatives, and
$$
\G_{ij}^{(k)}=\frac12g^{kl}\left(\d_j g_{il}+\d_i g_{jl}-\d_l
 g_{ij}\right)
$$
is the Christoffel symbol and depends on the first derivatives of
$(g_{ij})$, where the summation convention is used, $(g^{kl})$
denotes the inverse of $(g_{ij})$, and
$(\partial_1,\partial_2)=(\partial_x, \partial_y)$.

Therefore, given a positive definite metric $(g_{ij})\in C^{1,1}$,
the Gauss-Codazzi system gives us three equations for the three
unknowns $(L, M, N)$ determining the second fundamental form $I\!I$.
Note that, although $(g_{ij})$ is positive definite, $R_{1212}$ may
change sign and so does the Gauss curvature $\k$. Thus, as we will
discuss in Section 3, the Gauss-Codazzi system
\eqref{g1}--\eqref{g2} generically is of mixed hyperbolic-elliptic
type, as in transonic flow (cf.
\cite{Bers,CSW,CourantFriedrichs,Morawetz}). In \S 3--4, we
introduce a general approach to deal with the isometric immersion
problem involving nonlinear partial differential equations of mixed
hyperbolic-elliptic type by combining a fluid dynamic formulation of
balance laws in \S 3 with a compensated compactness framework in \S
4. As an example of direct applications of this approach, in \S 5,
we show how this approach can be applied to establish an isometric
immersion of a two-dimensional Riemannian manifold with negative
Gauss curvature in $\R^3$.

\section{Fluid Dynamic Formulation for the Gauss-Codazzi System}

From the viewpoint of geometry, the constraint condition \eqref{g2}
is a Monge-Amp\`{e}re equation and the equations in \eqref{g1} are
integrability relations. However, our goal here is to put the
problem into a fluid dynamic formulation so that the isometric
immersion problem may be solved via the approaches that have shown
to be useful in fluid dynamics for solving nonlinear systems of
balance laws.
To achieve this, we
formulate the isometric immersion problem via solvability of the
Gauss-Codazzi system \eqref{g1} under constraint \eqref{g2}, that
is, solving {\it first} for $h_{ij}, i,j=1,2,$ via \eqref{g1} with
constraint \eqref{g2} and {\it then} recovering ${\bf r}$ a
posteriori.

To do this, we set
$$
L=\r v^2+p,  \quad M=-\r uv, \quad  N=\r u^2+p,
$$
and  set $q^2=u^2+v^2$ as usual. Then the equations in \eqref{g1}
become the familiar balance laws of momentum:
\begin{equation} \label{g3}
\begin{split}
&\d_x(\r uv)+\d_y(\r v^2+p)
 =-(\r v^2+p)\G^{(2)}_{22}-2\r uv\G^{(2)}_{12}-(\r u^2+p)\G^{(2)}_{11}, \\
&\d_x(\r u^2+p)+\d_y(\r uv)
 =-(\r v^2+p)\G^{(1)}_{22}-2\r uv\G^{(1)}_{12}-(\r u^2+p)\G^{(1)}_{11},
\end{split}
\end{equation}
and the Monge-Amp\`{e}re constraint \eqref{g2} becomes
\begin{equation}\label{g4}
\r p q^2+p^2=\k.
\end{equation}
From this, we can see that, if the Gauss curvature $\k$ is allowed
to be both positive and negative, the ``pressure" $p$ cannot be
restricted to be positive. Our simple choice for $p$ is the
Chaplygin-type gas:
\begin{equation*}\label{g5}
p=-\frac{1}{\r}.
\end{equation*}
Then, from \eqref{g4}, we find
$$
-q^2+\frac1{\r^2}=\k,
$$
and hence we have the ``Bernoulli" relation:
\begin{equation}\label{g6}
\r=\frac{1}{\sqrt{q^2+\k}}.
\end{equation}
This yields
\begin{equation}\label{g7}
p=-\sqrt{q^2+\k},
\end{equation}
and the formulas for $u^2$ and $v^2$:
$$
u^2=p(p-M), \qquad v^2=p(p-L), \qquad M^2=(N-p)(L-p).
$$
The last relation for $M^2$ gives the relation for $p$ in terms of
$(L,M,N)$, and then the first two give the relations for $(u, v)$ in
terms of $(L,M,N)$.

We rewrite \eqref{g3} as
\begin{equation} \label{g8}
\begin{split}
&\d_x(\r uv)+\d_y(\r v^2+p) =R_1, \\
&\d_x(\r u^2+p)+\d_y(\r uv) =R_2,
\end{split}
\end{equation}
where $R_1$ and $R_2$ denote the right-hand sides of \eqref{g3}.

We now find the corresponding ``geometric rotationality--continuity
equations". Multiplying the first equation of \eqref{g8} by $v$ and
the second by $u$, and setting
$$
\d_x v-\d_y u=-\s,
$$
we see
\begin{equation*}
\begin{split}
&\frac{v}\r\div(\r u,\r v)-\frac12 \d_y\k=\frac{R_1}\r+\s u,\\
&\frac{u}\r\div(\r u,\r v)-\frac12 \d_x\k=\frac{R_2}\r-\s v,
\end{split}
\end{equation*}
and hence
\begin{equation} \label{g9}
\begin{split}
&\div(\r u,\r v) =\frac12\frac{\r}{v} \d_y\k
 +\frac{R_1}{v}+\frac{\r u\s}{v},\\
&\div(\r u,\r v) =\frac12\frac{\r}{u}\d_x\k
 +\frac{R_2}{u}-\frac{\r v\s}{u}.
\end{split}
\end{equation}
Thus, the right hand sides of \eqref{g9} are equal, which gives a
formula for $\s$:
\begin{equation} \label{g10}
\s=\frac1{\r q^2}\Big(v\big(\frac12\r \d_x\k+R_2\big)
 -u\big(\frac12\r \d_y\k+R_1\big)\Big).
\end{equation}
If we substitute this formula for $\s$ into \eqref{g9}, we can write
down our ``rotationality-continuity equations" as
\begin{gather}
\d_x v- \d_y u =\frac1{\r q^2}\Big(u\big(\frac12\r \d_y\k+R_1\big)-
v\big(\frac12\r \d_x\k+R_2\big)
         \Big), \label{g11} \\
\d_x(\r u)+ \d_y(\r v)= \frac12\frac{\r u}{q^2} \d_x\k
+\frac12\frac{\r v}{q^2} \d_y\k
 +\frac{v}{q^2}R_1+\frac{u}{q^2}R_2. \label{g12}
\end{gather}
In summary, the Gauss-Codazzi system \eqref{g1}--\eqref{g2}, the
momentum equations \eqref{g3}--\eqref{g7},
and the rotationality-continuity equations \eqref{g6} and
\eqref{g11}--\eqref{g12} are all formally equivalent.

\smallskip
However, for weak solutions, we know from our experience with gas
dynamics that this equivalence breaks down. In
Chen-Dafermos-Slemrod-Wang \cite{CDSW}, the decision was made (as is
standard in gas dynamics) to solve the rotationality-continuity
equations and view the momentum equations as ``entropy" equalities
which may become inequalities for weak solutions. In geometry, this
situation is just the reverse. It is the Gauss-Codazzi system that
must be solved exactly and hence the rotationality-continuity
equations will become ``entropy" inequalities for weak solutions.

The above issue becomes apparent when we set up ``viscous"
regularization that preserves the ``divergence" form of the
equations, which will be introduced in \S 5.3. This is crucial since
we need to solve \eqref{g11}--\eqref{g12} exactly, as we have noted.

\smallskip
To continue further our analogy, let us define the ``sound" speed:
\begin{equation}\label{g18}
c^2=p'(\r),
\end{equation}
which in our case gives
\begin{equation}\label{g19}
c^2=\frac1{\r^2}.
\end{equation}
Since our ``Bernoulli" relation is \eqref{g6}, we see
\begin{equation}\label{g20}
 c^2=q^2+\k.
\end{equation}
Hence, under this formulation,

(i) when $\k>0$, the ``flow" is subsonic, i.e., $q<c$,
       and system \eqref{g3}--\eqref{g4} is elliptic;

(ii) when $\k<0$, the ``flow" is supersonic, i.e., $q>c$,
       and system \eqref{g3}--\eqref{g4} is hyperbolic;

(iii) when $\k=0$, the ``flow" is sonic, i.e., $q=c$,
       and system \eqref{g3}--\eqref{g4} is degenerate.

In general, system \eqref{g3}--\eqref{g4} is of mixed
hyperbolic-elliptic type. Thus, the isometric immersion problem
involves the existence of solutions to nonlinear partial
differential equations of mixed hyperbolic-elliptic type.

\section{Compensated Compactness Framework for Isometric Immersions}

In this section, we form a compensated compactness framework and
present our new observation that this framework is a natural
formulation to ensure the weak continuity of the Gauss-Codazzi
system for approximate solutions, which yields the isometric
realization of two-dimensional Riemannian manifolds in $\R^3$.

\medskip
Let a sequence of functions $(L^\varepsilon, M^\varepsilon,
N^\varepsilon)(x,y)$, defined on an open subset $\Omega\subset
\R^2$, satisfy the following Framework (A):

\medskip
{\rm (A.1)}\, $|(L^\varepsilon, M^\varepsilon,
N^\varepsilon)(x,y)|\le C$\,\, a.e.\, $(x,y)\in \Omega$, for some
$C>0$ independent of $\e$;

\smallskip
{\rm (A.2)}\,  $\partial_xM^\varepsilon-\partial_y L^\varepsilon$
and $\partial_xN^\varepsilon-\partial_yM^\varepsilon$ are confined
in a compact set in $H_{loc}^{-1}(\Omega)$;

\smallskip
{\rm (A.3)\, There exist $o^\varepsilon_j(1), j=1,2,3$, with
$o^\varepsilon_j(1)\to 0$ in the sense of distributions as
$\varepsilon\to 0$ such that
\begin{equation} \label{g1-b}
\begin{split}
\d_x{M^\e}-\d_y{L^\e}
&=\G^{(2)}_{22}L^\e-2\G^{(2)}_{12}M^\e+\G^{(2)}_{11}N^\e +o^\e_1(1), \\
\d_x{N^\e}-\d_y{M^\e}
&=-\G^{(1)}_{22}L^\e+2\G^{(1)}_{12}M^\e-\G^{(1)}_{11}N^\e+o^\e_2(1),
\end{split}
\end{equation}
and
\begin{equation}\label{g2-b}
L^\varepsilon N^\varepsilon-(M^\varepsilon)^2=\kappa+o^\e_3(1).
\end{equation}

\medskip

Then we have

\begin{theorem}[Compensated compactness framework] \label{T}
$\quad$ Let a sequence of functions $(L^\varepsilon, M^\varepsilon,
N^\varepsilon)(x,y)$ satisfy Framework {\rm (A)}. Then there exists
a subsequence (still labeled) $(L^\varepsilon, M^\varepsilon,
N^\varepsilon)(x,y)$ that converges weak-star in $L^\infty(\Omega)$
to $(\bar{L}, \bar{M}, \bar{N})$ as $\varepsilon\to 0$ such that

\begin{enumerate}
\renewcommand{\theenumi}{\roman{enumi}}
\item $|(\bar{L}, \bar{M}, \bar{N})(x,y)|\le C \qquad
\mbox{a.e.} \,\, (x,y)\in\Omega$;

\item the Monge-Amp\'{e}re constraint \eqref{g2} is weakly
continuous with respect to the subsequence $(L^\varepsilon,
M^\varepsilon, N^\varepsilon)(x,y)$ that converges weak-star in
$L^\infty(\Omega)$  to $(\bar{L}, \bar{M}, \bar{N})$ as
$\varepsilon\to 0$;

\item the Gauss-Codazzi equations in \eqref{g1} hold.
\end{enumerate}

\noindent That is, the limit $(\bar{L}, \bar{M}, \bar{N})$ is a
bounded weak solution to the Gauss-Codazzi system
\eqref{g1}--\eqref{g2}, which yields an isometric realization of the
corresponding two-dimensional Riemannian manifold in $\R^3$.
\end{theorem}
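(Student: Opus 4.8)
The plan is to extract a weak-star convergent subsequence from the uniform bound and then verify the three conclusions in turn, with the weak continuity of the Monge-Amp\`ere determinant in (ii) being the heart of the matter and handled by the \emph{div-curl lemma} of Tartar \cite{Ta1} and Murat \cite{Mu2}. First, by {\rm (A.1)} the sequence $(L^\e, M^\e, N^\e)$ is bounded in $L^\infty(\Omega)$, so by Banach-Alaoglu a subsequence converges weak-star in $L^\infty(\Omega)$ to some $(\bar L, \bar M, \bar N)$; conclusion (i) then follows from the weak-star lower semicontinuity of the $L^\infty$ norm.

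For conclusion (ii), the key observation is that both quantities controlled in {\rm (A.2)} are of \emph{curl} type. Setting $\mathbf{U}^\e = (L^\e, M^\e)$ and $\mathbf{V}^\e = (M^\e, N^\e)$, one has $\text{curl}\,\mathbf{U}^\e = \d_x M^\e - \d_y L^\e$ and $\text{curl}\,\mathbf{V}^\e = \d_x N^\e - \d_y M^\e$, both confined to a compact set in $H^{-1}_{loc}(\Omega)$. To bring the div-curl lemma to bear I would rotate the first field, setting $\mathbf{W}^\e := (-M^\e, L^\e)$, so that $\div \mathbf{W}^\e = -\text{curl}\,\mathbf{U}^\e$ is compact in $H^{-1}_{loc}$. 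Since $\mathbf{W}^\e$ and $\mathbf{V}^\e$ are bounded in $L^\infty$, hence in $L^2_{loc}$, the div-curl lemma yields weak continuity of their inner product:
\[
\mathbf{W}^\e \cdot \mathbf{V}^\e = L^\e N^\e - (M^\e)^2 \;\rightharpoonup\; \bar L\,\bar N - \bar M^2
\]
in the sense of distributions. Combined with the constraint \eqref{g2-b} and $o^\e_3(1)\to 0$, this gives $\bar L\,\bar N - \bar M^2 = \k$; as the products are uniformly bounded, the distributional limit is in fact the weak-star limit, establishing (ii).

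For conclusion (iii), the equations \eqref{g1-b} are linear in $(L^\e, M^\e, N^\e)$ with fixed coefficients given by the Christoffel symbols $\Gamma^{(k)}_{ij}$, which are continuous since $(g_{ij})\in C^{1,1}$. Weak-star convergence in $L^\infty$ implies convergence in the sense of distributions, so each linear term on the right and each derivative on the left passes to the limit, while $o^\e_1(1)$ and $o^\e_2(1)$ vanish; hence $(\bar L, \bar M, \bar N)$ satisfies \eqref{g1} exactly. Finally, by the fundamental theorem of surface theory in the $L^\infty$ setting (Mardare \cite{Mardare2}), the bounded solution $(\bar L, \bar M, \bar N)$ recovers a $C^{1,1}$ isometric immersion of the manifold into $\R^3$. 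The main obstacle is precisely (ii): recognizing that both constraints in {\rm (A.2)} are curl-type and that a single rotation converts one into a divergence, so that the div-curl lemma produces exactly the determinant $LN-M^2$ rather than some other bilinear combination; the passage of the linear terms in (iii) to the weak limit is then routine.
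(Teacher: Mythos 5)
Your proof is correct and takes essentially the same route as the paper: in both, the heart of the matter is the Tartar--Murat div-curl lemma applied so as to yield weak continuity of the determinant $L^\e N^\e-(M^\e)^2$ under (A.1)--(A.2), with the linearity of \eqref{g1-b} making (iii) routine and Mardare's low-regularity fundamental theorem of surface theory supplying the immersion. The only cosmetic difference is that the paper packages the div-curl conclusion as a commutation identity for Young measures, $\langle\nu_{x,y}, M^2-LN\rangle=(\bar{M})^2-\bar{L}\bar{N}$, whereas you apply the lemma directly to the explicit fields $(-M^\e,L^\e)$ and $(M^\e,N^\e)$ --- an equivalent and, if anything, more streamlined presentation.
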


\smallskip
\begin{proof}
By the div-curl lemma of Tartar-Murat \cite{Ta1,Mu2} and the Young
measure representation theorem for a uniformly bounded sequence of
functions (cf. Tartar \cite{Ta1}), we employ (A.1)--(A.2) to
conclude that there exist a family of Young measures
$\{\nu_{x,y}\}_{(x,y)\in\Omega}$ and a subsequence (still labeled)
$(L^\varepsilon, M^\varepsilon, N^\varepsilon)(x,y)$ that converges
weak-star in $L^\infty(\Omega)$ to $(\bar{L}, \bar{M}, \bar{N})$ as
$\varepsilon\to 0$ such that

\medskip
(a)\, $(\bar{L}, \bar{M}, \bar{N})(x,y)=(\langle
\nu_{x,y},\,L\rangle, \langle\nu_{x,y},\, M\rangle,
\langle\nu_{x,y},\, N\rangle) \,\, \quad {\rm a.e.}\,\,
(x,y)\in\Omega$;

(b) \, $|(\bar{L}, \bar{M}, \bar{N})(x,y)|\le C \qquad \mbox{a.e.}
\,\, (x,y)\in\Omega$;

(c)\,  the following commutation identity holds:
\begin{equation}\label{4.1-a}
\langle\nu_{x,y}, M^2-LN\rangle=\langle\nu_{x,y},
M\rangle^2-\langle\nu_{x,y}, L\rangle\langle\nu_{x,y}, N\rangle=
(\bar{M})^2-\bar{L}\bar{N}.
\end{equation}

Since the equations in \eqref{g1-b} are linear in
$(L^\e,M^\e,N^\e)$, then the limit $(\bar{L}, \bar{M}, \bar{N})$
also satisfies the equations in \eqref{g1} in the sense of
distributions.

Furthermore, condition \eqref{g2-b} yields that
\begin{equation}\label{4.2-a}
\langle \nu_{x,y}, LN-M^2\rangle=\k(x,y) \qquad {\rm a.e.}\,\,
(x,y)\in\Omega.
\end{equation}
The combination \eqref{4.1-a} with \eqref{4.2-a} yields the weak
continuity of the Monge-Amp\'{e}re constraint with respect to the
sequence $(L^\e, M^\e, N^\e)$ that converges weak-star in
$L^\infty(\Omega)$ to $(\bar{L}, \bar{M}, \bar{N})$ as $\e\to 0$:
$$
\bar{L}\bar{N}-(\bar{M})^2=\k.
$$
Therefore, $(\bar{L},\bar{M},\bar{N})$ is a bounded weak solution of
the Gauss-Codazzi system \eqref{g1}--\eqref{g2}. Then the
fundamental theorem of surface theory implies an isometric
realization of the corresponding two-dimensional Riemannian manifold
in $\R^3$. This completes the proof.
\end{proof}

\begin{remark} In the compensated compactness framework, Condition
(A.1) can be relaxed to the following condition:

\smallskip
\mbox{(A.1)'} \, $\|(L^\e, M^\e, N^\e)\|_{L^p(\Omega)} \le C, \qquad
p>2$, for some $C>0$ independent of $\e$.

\smallskip
\noindent Then all the arguments for Theorem 4.1 follow only with
the weak convergence in $L^p(\Omega), p>2$, replacing the weak-star
convergence in $L^\infty(\Omega)$, with the aid of the Young measure
representation theorem for a uniformly $L^p$ bounded sequence of
functions (cf. Ball \cite{Ball}).
\end{remark}

\smallskip
There are various ways to construct approximate solutions by either
analytical methods, such as vanishing viscosity methods and
relaxation methods, or numerical methods, such as finite difference
schemes and finite element methods.  Even though the solution to the
Gauss-Codazzi system  may eventually turn out to be more regular,
especially in the region of  positive Gauss curvature $\kappa>0$,
the point of considering weak solutions here is to demonstrate that
such solutions may be constructed by merely using very crude
estimates. Such estimates are available in a variety of
approximating methods through basic energy-type estimates, besides
the $L^\infty$ estimate. On the other hand, in the region of
negative Gauss curvature $\kappa<0$, discontinuous solutions are
expected so that the estimates can be improved at most up to $BV$ in
general.

\smallskip
The compensated compactness framework (Theorem 4.1) indicates that,
in order to find an isometric immersion, it suffices to construct a
sequence of approximate solutions $(L^\varepsilon, M^\varepsilon,
N^\varepsilon)(x,y)$ satisfying Framework (A), which yields its weak
limit $(\bar{L},\bar{M},\bar{N})$ to be an isometric immersion. To
achieve this through the fluid dynamic formulation \eqref{g3} and
\eqref{g6} (or \eqref{g7}), it requires a uniform $L^\infty$
estimate of $(u^\e, v^\e)$ such that the sequence
$$
(L^\e, M^\e, N^\e)=(\r^\e (v^\e)^2+p^\e, -\r^\e u^\e v^\e, \r^\e
(u^\e)^2+p^\e)
$$
with
$$
p^\e=-\frac{1}{\r^\e}=\sqrt{(u^\e)^2+(v^\e)^2+\kappa}
$$
satisfies Framework (A).

\smallskip
The fluid dynamic formulation, \eqref{g3} and \eqref{g6} (or
\eqref{g7}), and the compensated compactness framework (Theorem 4.1)
provide a unified approach to deal with various isometric immersion
problems even for the case when the Gauss curvature changes sign,
that is, for the equations of mixed elliptic-hyperbolic type.

\section{Isometric Immersions of Two-Dimensional Riemannian Manifolds
 with Negative Gauss Curvature}

As a first example, in this section, we show how this approach can
be applied to establish an isometric immersion of a two-dimensional
Riemannian manifold with negative Gauss curvature in $\R^3$.

\subsection{Reformulation}

In this case, $\kappa<0$ in $\Omega$ and, more specifically,
$$
\kappa=-\gamma^2, \quad \gamma> 0
\qquad\,\, \text{in}\,\, \Omega.
$$
%
%
For convenience, we assume $\gamma\in C^1$ in this section and
rescale $(L,M,N)$ in this case as
$$
\tilde{L}=\frac{L}{\g}, \qquad \tilde{M}=\frac{M}{\g}, \qquad
\tilde{N}=\frac{N}{\g},
$$
so that \eqref{g2} becomes
$$
\tilde{L}\tilde{N}-\tilde{M}^2=-1.
$$
Then, without ambiguity, we redefine the ``fluid variables" via
$$
\tilde{L}=\r v^2+p, \qquad \tilde{M}=-\r uv, \qquad  \tilde{N}=\r
u^2+p,
$$
and set $q^2=u^2+v^2$, where we have still used $(u,v,p,\rho)$ as
the scaled variables and will use them hereafter (although they are
different from those in \S 2--\S 4).

Then the equations in \eqref{g1} become the same form of balance
laws of momentum:
\begin{equation} \label{g3a}
\begin{split}
&\d_x(\r uv)+\d_y(\r v^2+p)
 =R_1,\\
&\d_x(\r u^2+p)+\d_y(\r uv)
 =R_2,
\end{split}
\end{equation}
where
\begin{eqnarray}
&&R_1:= -(\r v^2+p)\tilde{\G}^{(2)}_{22}-2\r uv\tilde{\G}^{(2)}_{12}
-(\r u^2+p)\tilde{\G}^{(2)}_{11}, \label{g3a-1} \\
&&R_2:=-(\r v^2+p)\tilde{\G}^{(1)}_{22}-2\r
uv\tilde{\G}^{(1)}_{12}-(\r u^2+p)\tilde{\G}^{(1)}_{11},
\label{g3a-2}
\end{eqnarray}
$$
\tilde{\G}^{(1)}_{11}={\G}^{(1)}_{11}+\frac{\g_x}{\g}, \qquad
\tilde{\G}^{(1)}_{12}={\G}^{(1)}_{12}+\frac{\g_y}{2\g},\qquad
\tilde{\G}^{(1)}_{22}={\G}^{(1)}_{22},
$$
$$
\tilde{\G}^{(2)}_{11}={\G}^{(2)}_{11}, \qquad
\tilde{\G}^{(2)}_{12}={\G}^{(2)}_{12}+\frac{\g_x}{2\g},\qquad
\tilde{\G}^{(2)}_{22}={\G}^{(2)}_{22}+\frac{\g_y}{\g}.
$$

Furthermore, the constraint $\tilde{L}\tilde{N}-\tilde{M}^2=-1$
becomes
\begin{equation}\label{g4a}
\r p q^2+p^2=-1.
\end{equation}
{}From $p=-\frac1{\r}$ and \eqref{g4a},
we have the ``Bernoulli" relation:
\begin{equation}\label{g6a}
\r=\frac{1}{\sqrt{q^2-1}} \quad\text{or}\quad p=-\sqrt{q^2-1},
\end{equation}
which yields
\begin{equation}\label{g7a}
 u^2=p(p-\tilde{N}), \quad
  v^2=p(p-\tilde{L}), \quad
  (\tilde{M})^2=(\tilde{N}-p)(\tilde{L}-p).
\end{equation}
Then the last relation in \eqref{g7a} gives the relation for $p$ in
terms of $(\tilde{L}, \tilde{M}, \tilde{N})$, and the first two give
the relations for $(u, v)$ in terms of $(\tilde{L}, \tilde{M},
\tilde{N})$.

Similarly to the calculation in \S 3, we can write down our
``rotationality--continuity equations" as
\begin{gather}
\d_xv-\d_y u=-\frac1{\r q^2}\left(v R_2 - u R_1\right)=:S_1, \label{g11a} \\
\d_x(\r u)+ \d_y(\r v) =\frac{v}{q^2}R_1+\frac{u}{q^2}R_2=:S_2.
\label{g12a}
\end{gather}

Under the new scaling, the ``sound" speed is
\begin{equation}\label{g18a}
 c^2=p'(\r)=\frac{1}{\rho^2}>0.
 \end{equation}
Then the ``Bernoulli" relation \eqref{g6} yields
 \begin{equation}\label{g20a}
 c^2=q^2-1.
 \end{equation}
Therefore, $q>c$, and the ``flow" is always supersonic, i.e., the
system is purely hyperbolic.

\subsection{Riemann invariants}

In polar coordinates $(u, v)=(q\cos\t, q\sin\t)$, we have
$$
R_1=\r q^2\cos^2\t \;\tilde{\G}^{(2)}_{22}-2\r
q^2\sin\t\cos\t\;\tilde{\G}^{(2)}_{12} +\r
q^2\sin^2\t\;\tilde{\G}^{(2)}_{11}
-\r\big(\tilde{\G}^{(2)}_{22}+\tilde{\G}^{(2)}_{11}\big),
$$
$$
R_2=\r q^2\cos^2\t \;\tilde{\G}^{(1)}_{22}-2\r
q^2\sin\t\cos\t\;\tilde{\G}^{(1)}_{12} +\r
q^2\sin^2\t\;\tilde{\G}^{(1)}_{11}
-\r\big(\tilde{\G}^{(1)}_{22}+\tilde{\G}^{(1)}_{11}\big),
$$
and then \eqref{g11a} and \eqref{g12a} become
\begin{eqnarray}
&& \sin\t \d_x q +q\cos\t \; \d_x\t-\cos\t\, \d_y q +q\sin\t \; \d_y\t=S_1, \label{g26}\\
&& \frac{\cos\t}{q(q^2-1)}\d_xq +\sin\t \; \d_x\t
 +\frac{\sin\t}{q(q^2-1)}\d_y q -\cos\t \; \d_y\t = -\frac{\sqrt{q^2-1}}{q}S_2.
 \label{g27}
\end{eqnarray}
That is, as a first-order system, \eqref{g11a} and \eqref{g12a} can
be written as
\begin{equation} \label{g28}
 \begin{bmatrix}
  \sin\t  & q\cos\t \\
  \frac{1}{q(q^2-1)}\cos\t &  \sin\t
 \end{bmatrix}
 \d_x\begin{bmatrix}
 q \\ \t
 \end{bmatrix}   +
 \begin{bmatrix}
  -\cos\t  & q\sin\t \\
  \frac{1}{q(q^2-1)}\sin\t &  -\cos\t
 \end{bmatrix}
 \d_y\begin{bmatrix}
 q \\ \t
 \end{bmatrix}  =
 \begin{bmatrix}
 S_1 \\  -\frac{\sqrt{q^2-1}}{q}S_2
 \end{bmatrix}.
\end{equation}
One of our main observations is that, under this reformation, the
two coefficient matrices in \eqref{g28} actually commute,
which guarantees that they have common eigenvectors. The eigenvalues
of the first and second matrices are
$$
\l_\pm=\sin\t\pm \frac{\cos\t}{\sqrt{q^2-1}}, \qquad
 \mu_\pm=-\cos\t\pm \frac{\sin\t}{\sqrt{q^2-1}},
$$
and the common left eigenvectors of the two coefficient matrices are
$$
(\pm\frac{1}{q\sqrt{q^2-1}}, 1).
$$
Thus, we may define the Riemann invariants $W_\pm=W_\pm(\theta, q)$
as
\begin{equation} \label{r1}
 \d_\t W_\pm=1, \quad
 \d_q W_\pm
 =\pm\frac{1}{q\sqrt{q^2-1}},
\end{equation}
which yields
\begin{equation} \label{r2}
 W_\pm=\t\pm\arccos\big(\frac{1}q\big).
\end{equation}

Now multiplication \eqref{g28}  by $(\d_q W_\pm, \d_\t W_\pm)$ from
the left yields
 \begin{gather}
 \l_+\big(\d_q W_+ \, \d_xq+ \d_\t W_+\, \d_x\t\big)
 +\mu_+\big(\d_q W_+\, \d_yq + \d_\t W_+\, \d_y\t\big)
 =S_1 \d_q W_+ -\frac{\sqrt{q^2-1}}{q}S_2,  \label{g29} \\
 \l_-\big(\d_q W_-\, \d_xq+ \d_\t W_-\, \d_x\t\big)
 +\mu_-\big( \d_q W_-\, \d_yq + \d_\t W_-\, \d_y\t\big)
 =S_1 \d_q W_- -\frac{\sqrt{q^2-1}}{q}S_2. \label{g30}
 \end{gather}
From \eqref{r2},
$$
\d_x W_\pm=\d_q W_+ \,\d_x q + \d_\t W_+\,\d_x\t, \qquad \d_y
W_\pm=\d_q W_+\, \d_yq+ \d_\t W_+\, \d_y\t,
$$
then we can write \eqref{g29} and \eqref{g30} as
\begin{gather}
\l_+ \d_x W_+ +\mu_+ \d_y W_+ =\frac{1}{q\sqrt{q^2-1}}S_1
-\frac{\sqrt{q^2-1}}{q} S_2, \label{g35}\\
\l_-\d_x W_- +\mu_- \d_y W_- =-\frac{1}{q\sqrt{q^2-1}}S_1
-\frac{\sqrt{q^2-1}}{q} S_2. \label{g36}
 \end{gather}

\subsection{Vanishing viscosity method via parabolic regularization}


Now we introduce a vanishing viscosity method via parabolic
regularization to obtain the uniform $L^\infty$ estimate by
identifying invariant regions for the approximate solutions.

First, if $\tilde{R_1}$ and $\tilde{R_2}$ denote the additional
terms that should be added to the right-hand side of the
Gauss-Codazzi system \eqref{g3a}, our first choice is
 \begin{equation} \label{g45b}
 \tilde{R_1}=\e\partial_y^2(\r v), \quad \tilde{R_2}=\e\partial_y^2(\r u),
 \end{equation}
which gives us the system of ``viscous" parabolic regularization:
\begin{equation} \label{g13}
\begin{split}
&\d_x(\r uv)+\d_y(\r v^2+p) =R_1+ \e\partial_y^2(\r v)=R_1+\tilde{R}_1, \\
&\d_x(\r u^2+p)+\d_y(\r uv) =R_2+\e\partial_y^2(\r
u)=R_2+\tilde{R}_2.
\end{split}
\end{equation}
From equations \eqref{g11} and \eqref{g12}, we see
\be \label{g46b}
 \tilde{S_1}=-\frac1{\r q^2}\left(v\tilde{R_2}-u\tilde{R_1}\right), \quad
 \tilde{S_2}=\frac1{q^2}\left(v\tilde{R_1}+u\tilde{R_2}\right)
\ee
should be added to $S_1$ and $S_2$ on the right-hand side of
\eqref{g11} and \eqref{g12}. In polar coordinates
$(u,v)=(q\cos\t,q\sin\t)$, \eqref{g46b} becomes
 \be \label{g47b}
 \tilde{S_1}=\e\frac{2}{\r q}\partial_y\t\partial_y(\r q)+\e\partial_y^2\t,
 \qquad
 \tilde{S_2}=\e\frac{1}{q}\partial_y^2(\r q)-\e\r(\partial_y\t)^2.
  \ee
Note the identity
\bes
\begin{split}
&\e \partial_y^2\Big(\arccos\big(\frac{1}{q}\big)\Big)
   =\e \partial_y^2\left(\text{arccsc} (\r q)\right) \\
&=-\e \partial_y\Big(\frac1{\r
q\sqrt{\r^2q^2-1}}\Big)\,\partial_y(\r q)
  -\frac{\e}{\r^2 q}\partial_y^2(\r q) \\
&=-\e \partial_y\Big(\frac1{\r
q\sqrt{\r^2q^2-1}}\Big)\,\partial_y(\r q)
  -\frac{\tilde{S_2}}{\r^2}-\frac{\e}{\r}(\partial_y\t)^2.
\end{split}
\ees
Then
$$
\frac{\tilde{S_2}}{\r^2}=-\e\partial_y^2\Big(\arccos\big(\frac{1}{q}\big)\Big)
-\e\partial_y\Big(\frac1{\r q\sqrt{\r^2q^2-1}}\Big)\,\partial_y(\r
q) -\frac{\e}{\r}(\partial_y\t)^2,
$$
and
thus
\bes
\begin{split}
&\tilde{S_1}-(q^2-1)\tilde{S_2}=\tilde{S_1}-\frac{\tilde{S_2}}{\r^2}\\
&=\frac{2\e}{\r q}\partial_y\t\,\partial_y(\r q)+\e\partial_y^2\t
+\e\partial_y^2\Big(\arccos\big(\frac{1}{q}\big)\Big)
+\e\partial_y\Big(\frac1{\r q\sqrt{\r^2q^2-1}}\Big)\,\partial_y(\r
q) +\frac{\e}{\r}(\partial_y\t)^2.
\end{split}
\ees
Since
$$
\partial_y\t=\partial_yW_+ +\frac{\partial_y(\r q)}{\r q\sqrt{\r^2 q^2-1}},
$$
then
$$
\tilde{S_1}-(q^2-1)\tilde{S_2}= \e\partial_y^2W_+ +\frac{2\e
q}{\r}\partial_y W_+\,\partial_y(\r q) +\e (\partial_y W_+)^2.
$$
Similarly, using
$$
\partial_y\t=\partial_y W_- -\frac{\partial_y(\r q)}{\r q\sqrt{\r^2 q^2-1}},
$$
we have
$$
-\tilde{S_1}-(q^2-1)\tilde{S_2}= -\e \partial_y^2 W_- -\frac{2\e
q}{\r}\partial_y W_-\,\partial_y(\r q) +\e (\partial_y W_-)^2.
$$

Thus, if we add the above $\tilde{S_1}$ and $\tilde{S_2}$ to the
original $S_1$ and $S_2$, \eqref{g35} and \eqref{g36} become
 \begin{equation} \label{g45}
 \begin{split}
& q\sqrt{q^2-1}
  \Big(\l_+\frac{\d W_+}{\d x}+\mu_+\frac{\d W_+}{\d y}\Big),\\
&\quad = \e \partial_y^2 W_+
 +\frac{2\e q}{\r}\partial_y W_+\,\partial_y(\r q)
+\e (\partial_yW_+)^2+S_1-\left(q^2-1\right) S_2
 \end{split}
\end{equation}
\begin{equation} \label{g46}
\begin{split}
&q\sqrt{q^2-1}
\Big(\l_-\frac{\d W_-}{\d x}+\mu_-\frac{\d W_-}{\d y}\Big)\\
&\quad =-\e \partial_y^2 W_- -\frac{2\e q}{\r}\partial_y
W_-\,\partial_y(\r q) +\e (\partial_y W_-)^2- S_1-\left(q^2-1\right)
S_2.
\end{split}
\end{equation}
Plugging $R_1$ and $R_2$ into $S_1$ and $S_2$ yields \be \label{s1}
\begin{split}
& S_1\pm\left(q^2-1\right) S_2 \\
&= -q
\sin\t\Big(\tilde{\G}_{22}^{(1)}\cos^2\t-2\tilde{\G}_{12}^{(1)}\sin\t\cos\t
       +\tilde{\G}_{11}^{(1)}\sin^2\t-\frac{1}{q^2}\big(\tilde{\G}_{22}^{(1)}
       +\tilde{\G}_{11}^{(1)}\big)\Big)\\
&\quad
-q\cos\t\Big(-\tilde{\G}_{22}^{(2)}\cos^2\t+2\tilde{\G}_{12}^{(2)}\sin\t\cos\t
       -\tilde{\G}_{11}^{(2)}\sin^2\t+\frac{1}{q^2}\big(\tilde{\G}_{22}^{(2)}
       +\tilde{\G}_{11}^{(2)}\big)\Big)\\
&\quad \pm\frac{1}{\r}\left\{
q\cos\t\Big(\tilde{\G}_{22}^{(1)}\cos^2\t-2\tilde{\G}_{12}^{(1)}\sin\t\cos\t
       +\tilde{\G}_{11}^{(1)}\sin^2\t-\frac{1}{q^2}\big(\tilde{\G}_{22}^{(1)}
       +\tilde{\G}_{11}^{(1)}\big)\Big)\right.\\
&\qquad\quad\,\, \left.
+q\sin\t\Big(\tilde{\G}_{22}^{(2)}\cos^2\t-2\tilde{\G}_{12}^{(2)}\sin\t\cos\t
  +\tilde{\G}_{11}^{(2)}\sin^2\t-\frac{1}{q^2}\big(\tilde{\G}_{22}^{(2)}
  +\tilde{\G}_{11}^{(2)}\big)\Big)\right\}.
\end{split}
\ee Then system \eqref{g45}--\eqref{g46} is parabolic when
$\lambda_+>0$ and $\lambda_-<0$.

\medskip
Furthermore, setting $(E,F,G)=(g_{11}, g_{12}, g_{22})$, we recall
the following classical identities:
\begin{eqnarray*}
&{\G}_{11}^{(1)}=\frac{GE_x-2FF_x+FE_y}{2(EG-F^2)}, \qquad
  &{\G}_{22}^{(1)}=\frac{2GF_y-GG_x-FG_x}{2(EG-F^2)},\\
&{\G}_{11}^{(2)}=\frac{2EF_x-EE_y-FE_x}{2(EG-F^2)}, \qquad
  &{\G}_{22}^{(2)}=\frac{EG_y-2FF_y+FG_x}{2(EG-F^2)}, \\
&{\G}_{12}^{(1)}=\frac{GE_y-FG_x}{2(EG-F^2)},\qquad\,\,\,\, \qquad
  &{\G}_{12}^{(2)}=\frac{EG_x-FE_y}{2(EG-F^2)},
\end{eqnarray*}
$$
(EG-F^2)^2\k=\det\begin{bmatrix}
                  -\frac12E_{yy}+F_{xy}-\frac12G_{xx} &\frac12E_x & F_x-\frac12F_y\\
                  F_x-\frac12G_x & E & F\\
                  \frac12 G_y &F & G
                  \end{bmatrix}
            -\det\begin{bmatrix}
                  0&\frac12E_y&\frac12G_x\\
                  \frac12E_y&E&F\\
                  \frac12G_x&F&G
                  \end{bmatrix},
$$
and $\g^2=-\k$.

\medskip
\subsection{$L^\infty$--estimate for the viscous approximate
solutions}

Based on the calculation above for the Riemann invariants, we now
introduce an approach to make the $L^\infty$ estimate. First we need
to sketch the graphs of the level sets of $W_\pm$. If
$$
W_\pm=\t\pm\arccos\big(\frac{1}q\big)=C_\pm \qquad \text{for
constants } C_\pm,
$$
then
$$
\frac{d\t}{d q}=\mp\frac{d}{d q}\Big(\arccos\big(\frac{1}q\big)\Big)
=\mp\frac{1}{q\sqrt{q^2-1}} \qquad\,\,\text{on}\,\, W_\pm=C_\pm,
$$
and, as $q\to\infty$,
$$
\frac{d\t}{d q}\to 0, \qquad
\t\to C_\pm\mp\arccos(0)=C_\pm\mp\frac{\pi}2
\qquad\quad\text{on}\quad W_\pm=C_\pm.
$$
See Fig. \ref{f1} for the graphs of the level sets $W_\pm=C_\pm$.

\begin{figure}[h]
\centering
\includegraphics[width=5.5in]{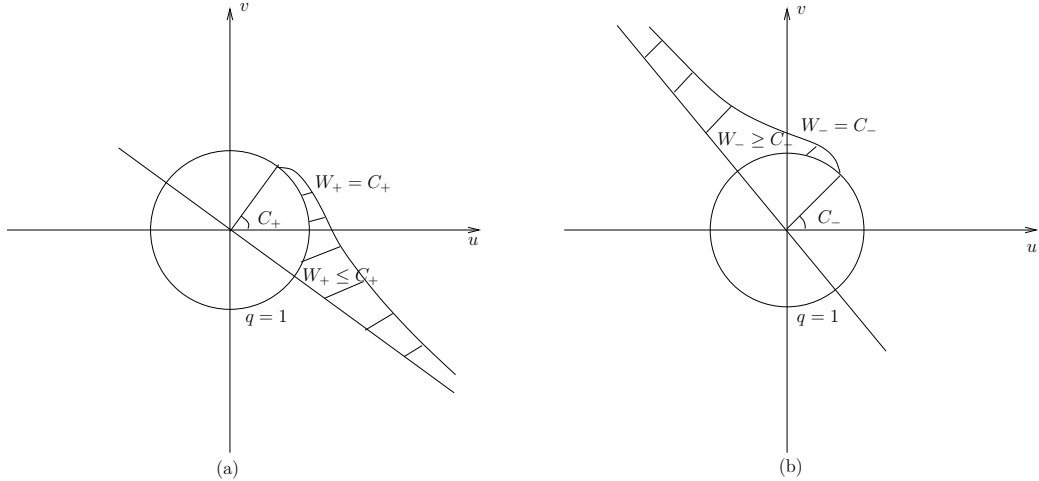}  
\caption{Level sets}
 \label{f1}
\end{figure}

Next we examine the meaning of inequality $W_+\le C_+$, i.e.,
$$
\t+\arccos\big(\frac{1}q\big)\le C_+.
$$
For example, if $q=1$, then $\t\le C_+$. This indicates the region
of $W_+\le C_+$ as sketched in Fig. \ref{f1}(a). Similarly, $W_-\ge
C_-$ means
$$
\t-\arccos\big(\frac{1}q\big)\ge C_-,
$$
and, if $q=1$, then $\t\ge C_-$, and the region of $W_-\ge C_-$ is
sketched in Fig. \ref{f1}(b). Thus we see that
$$
W_+\le C_+ \quad\text{means the region below}\quad W_+=C_+,
$$
$$
W_-\ge C_- \quad\text{means the region below}\quad W_-=C_-;
$$
and
$$
W_+\ge C_+ \quad\text{means the region above}\quad W_+=C_+,
$$
$$
W_-\le C_- \quad\text{means the region above}\quad W_-=C_-.
$$

\bigskip As an example, we now focus on the case that
\begin{equation}\label{5.ex-1}
F=0, \qquad E(x)=G(x).
\end{equation}
Then
$$
{\G}_{11}^{(1)}=\frac{E'}{2E}, \quad {\G}_{12}^{(1)}=0, \quad
{\G}_{22}^{(1)}=-\frac{E'}{2E}; \quad {\G}_{11}^{(2)}=0, \quad
{\G}_{12}^{(2)}=\frac{E'}{2E}, \quad
 {\G}_{22}^{(2)}=0.
$$
Therefore, we have
$$
\tilde{\G}_{11}^{(1)}=\frac{E'}{2E}+\frac{\g'}{\g}, \quad
\tilde{\G}_{12}^{(1)}=0, \quad \tilde{\G}_{22}^{(1)}=-\frac{E'}{2E};
\quad \tilde{\G}_{11}^{(2)}=0, \quad
\tilde{\G}_{12}^{(2)}=\frac{E'}{2E}+\frac{\g'}{2\g}, \quad
\tilde{\G}_{22}^{(2)}=0,
$$
and the right-hand side of \eqref{s1} is equal to
$$
\frac1{2\g^2}\left(\frac{\k'}{\r^2 q}
-\g^2q\frac{E'}{E}\right)\sin\t
\pm\frac{1}{2\g^2\r}\left(-\frac{\k'}{q}
+q\g^2\frac{E'}{E}\right)\cos\t.
$$
Thus, the two solutions $\t_\pm(q)$ that make the right-hand side of
\eqref{s1} equal to zero satisfy
\begin{equation} \label{tan}
\tan\t=\pm\frac{\frac{1}{\r}\left(\frac{\k'}{q} -\g^2 q
\frac{E'}{E}\right)}{\frac{\k'}{\r^2 q}-\g^2q\frac{E'}{E}}.
\end{equation}
If we fix the intersection point of $\t_\pm(q)$ at
$$
\t=0, \quad q=q_0=\beta,
$$
where $\beta>1$ is a constant, then the above ordinary differential
equation \eqref{tan} becomes
\begin{equation}\label{ode-1}
\frac{1}{\beta^2}\frac{\k'(x)}{\k(x)}+\frac{E'(x)}{E(x)}=0,
\end{equation}
i.e.,
$$
\frac{d}{dx}\ln\left(|\k(x)|^{\frac{1}{\beta^2}}E(x)\right)=0.
$$
Thus,
$$
|\k(x)|^{\frac{1}{\beta^2}}E(x)=const. >0.
$$
Since $\k(x)<0$, then
$$
\k(x)=-\k_0 E(x)^{-\beta^2},
$$
where $\k_0>0$ is a constant, and equation \eqref{tan} for $\tan\t$
becomes
$$
\tan\t=\pm\frac{\sqrt{q^2-1}(\beta^2-q^2)}{\beta^2-(\beta^2-1) q^2}.
$$

Assume that we have a solution $E(x)$ to \eqref{ode-1}. Fix another
constant $1<\a<\b$. Then the curves $\t_\pm({q})$ are independent of
$(x,y)$ and look like the sketch in Fig. \ref{f2}.

\begin{figure}[h]
\centering
\includegraphics[width=4.5in]{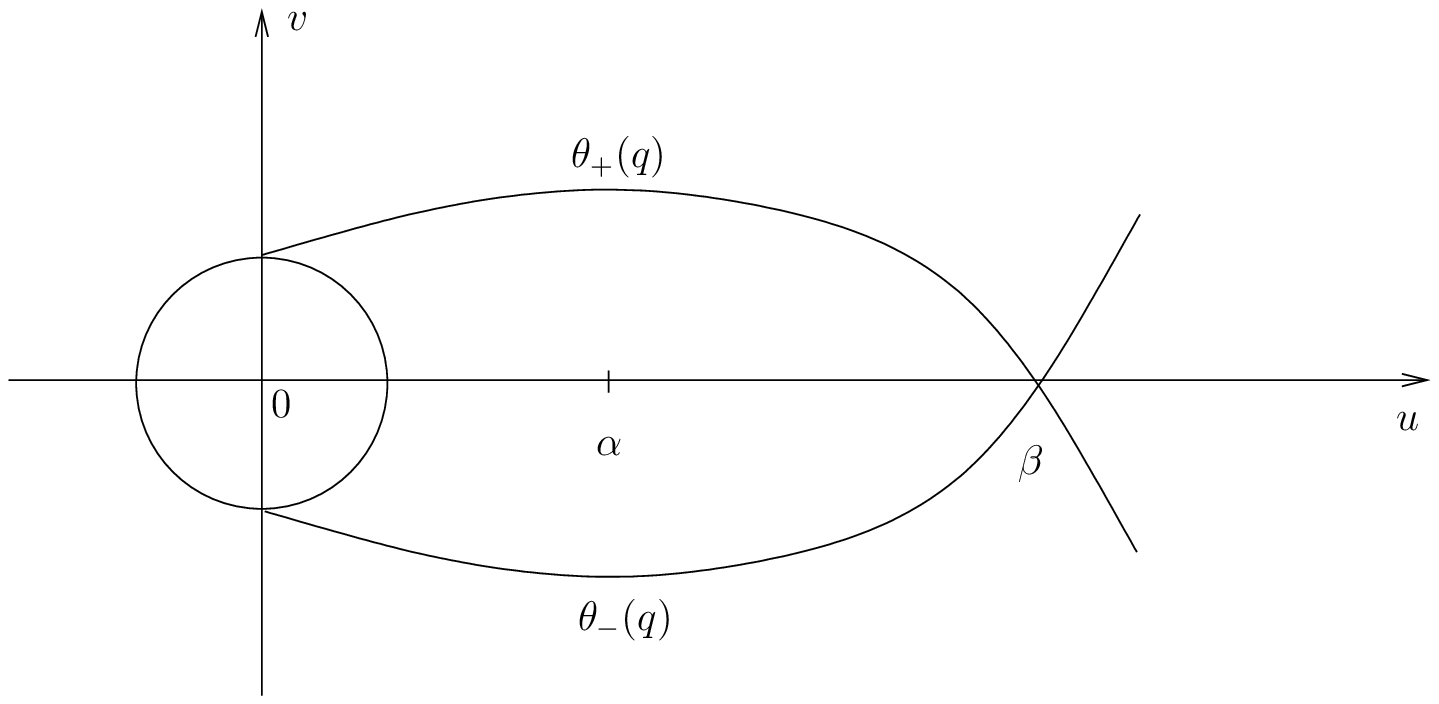}  
\caption{Graphs of $\t_\pm$}
 \label{f2}
\end{figure}

\begin{figure}[h]
\centering
\includegraphics[width=4.5in]{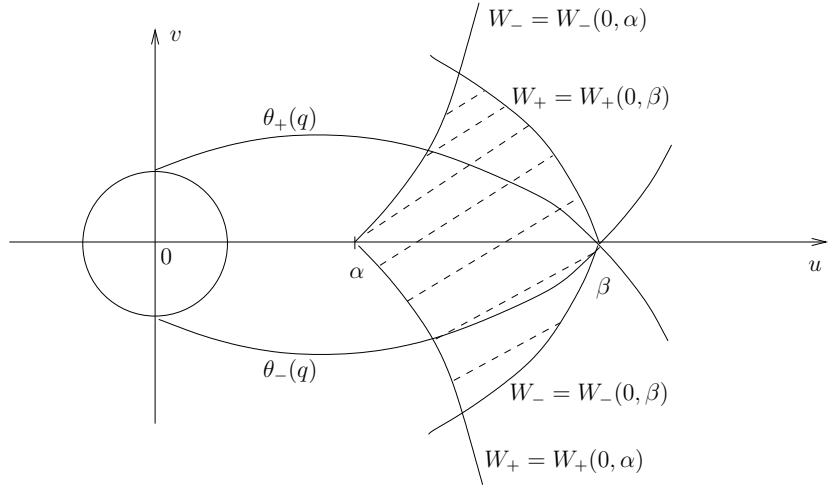}  
\caption{Invariant regions}
 \label{f3}
\end{figure}

Next, note that $W_\pm$, when evaluated at $\t=0, q=\a$, and $\t=0,
q=\b$, take on the constant values, i.e., independent of $(x,y)$.
Equations \eqref{g45} and \eqref{g46} imply that, at any point where
$\nabla W_+=0$ (respectively $\nabla W_-=0$),
$$
\e\g\partial_y^2 W_+ \bc
                >0 \quad &\text{for}\,\, \t>\t_+(q),\\
                <0 \quad &\text{for}\,\, \t<\t_+(q),
                \ec
$$
$$
\e\g\partial_y^2 W_- \bc
                >0 \quad &\text{for}\,\, \t>\t_-(q),\\
                <0,\quad  &\text{for}\,\, \t<\t_-(q).
                \ec
$$
Hence, when $\lambda_+>0$ and $\lambda_-<0$, by the maximum
principle (cf. \cite{evans2,PW}),
$$
W_+ \quad\text{has no internal maximum for}\quad \t>\t_+(q),
$$
$$
W_+ \quad\text{has no internal minimum for}\quad \t<\t_+(q),
$$
$$
W_- \quad\text{has no internal maximum for}\quad \t>\t_-(q),
$$
$$
W_- \quad\text{has no internal minimum for}\quad \t<\t_-(q).
$$
Define
$$
W_\pm(0,\beta)=\pm\cos^{-1}\big(\frac1{\beta}\big),\quad
W_\pm(0,\a)=\pm\cos^{-1}\big(\frac1{\a}\big).
$$
Therefore, if the data is such that $W_+\le W_+(0, {\b})$, then
$W_+$ can have no internal maximum greater than $W_+(0, {\b})$ for
$\t>\t_+(q)$. Similarly, if the data is such that $W_-\ge
W_-(0,{\b})$ ($=W_+(0,{\b})$), then $W_-$ can have no internal
minimum less than $W_-(0,{\b})$ for $\t<\t_-(q)$. Furthermore, if
the data is such that $W_+\ge W_+(0, {\a})$, then $W_+$ can have no
internal minimum less than $W_+(0, {\a})$ for $\t<\t_+(q)$;  if that
data is such that $W_-\le W_-(0, {\a})$, then $W_-$ can have no
internal maximum greater than $W_-(0, {\a})$ for $\t>\t_-(q)$. Thus,
the diamond-shaped region in Fig. \ref{f3}  provides the upper and
lower bounds for $W_\pm$.

From the definition of $\l_\pm$:
$$
\l_\pm
=\sin\t\pm \frac{\cos\t}{\sqrt{q^2-1}},
$$
we easily see that the lines $\l_\pm=0$ are as sketched in Fig.
\ref{f4}.

\begin{figure}[h]
\centering
\includegraphics[width=4.5in]{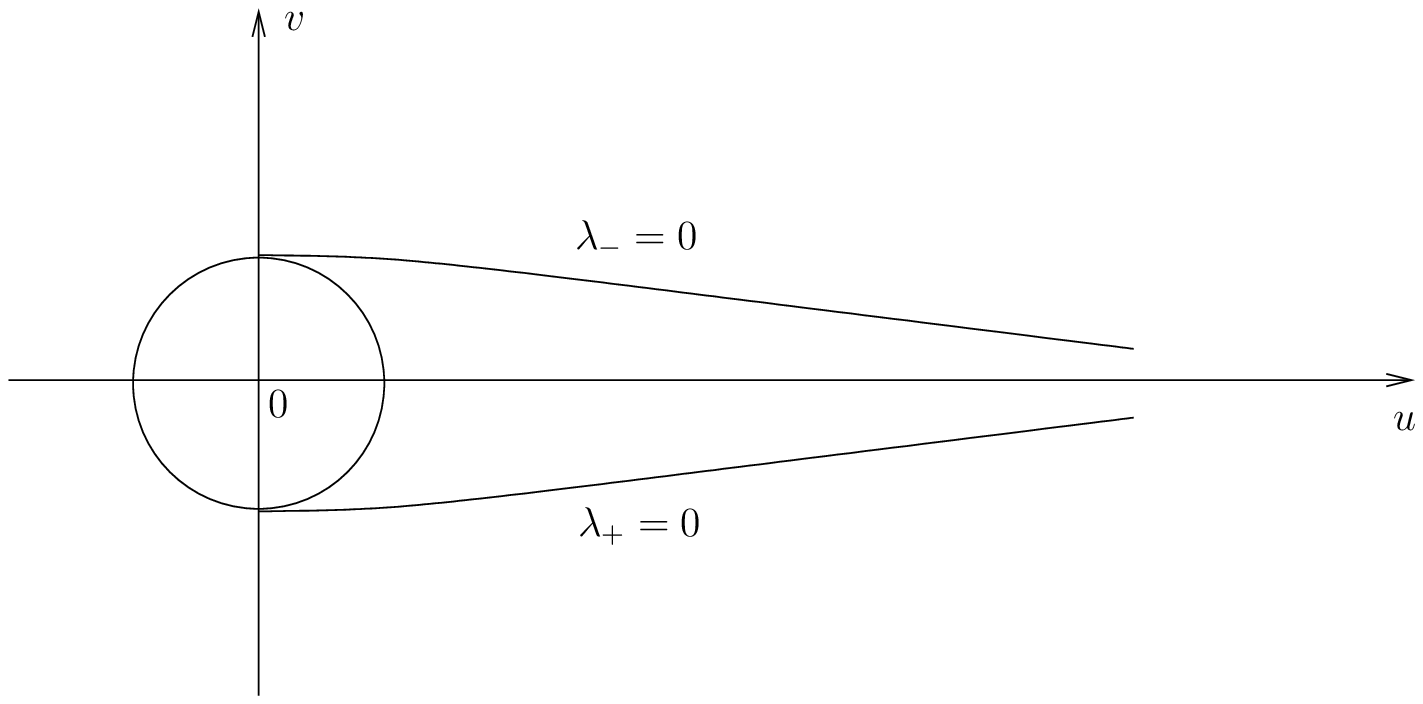}  
\caption{Graphs of $\l_\pm=0$}
 \label{f4}
\end{figure}

Notice that $\l_+>0$ and $\l_-<0$ are in the region above $\l_+=0$
and below $\l_-=0$. If we now super-impose Fig. \ref{f3} on top of
Fig. \ref{f4} and choose $\a$ sufficiently close to $\b$, we see
that there is a region where the four-sided region of Fig. \ref{f3}
is entirely confined in the region above $\l_+=0$ and below $\l_-=0$
in Fig. \ref{f4}. Hence, the parabolic maximum/minimum principles
apply and the four-sided region is an invariant region. For example,
in the half-plane:
\begin{equation} \label{domain-Omega}
\Omega:=\{(x,y)\, :\, x\ge 0, \quad y\in\R\}
\end{equation}
with periodic initial data $(q(0,y),\; \t(0, y))$ prescribed in the
four-sided region,
the  maximum/minimum principles
yield the invariant region for the periodic solution.

%

\bigskip
There is an alternative symmetry about $\t=\frac{\pi}2$.
If we set $\t=\psi+\frac{\pi}2$, then the right-hand side of
\eqref{s1} becomes
$$
\frac1{2\g^2}\left(-\frac{\kappa'}{\r^2
q}+\g^2q\frac{E'}{E}\right)\cos\psi
\pm\frac{1}{2\g^2\r}\left(\frac1{q}\frac{\d\k}{\d y}
-\g^2q\frac{E'}{E}\right)\sin\psi=0.
$$
If $\psi^\pm$ satisfy the above equations, then $\psi^+$ and
$\psi^-$ are symmetric about $\psi=0$, i.e., $\t=\frac{\pi}2$. Look
for the crossing on $\psi=0$ so that
$$
\frac{1}{2\r^2}\left(-\frac{\k'}{\rho^2q}+\g^2q\frac{E'}{E}\right)=0.
$$
With the crossing at $q=\b$, this gives
\begin{equation}\label{ode-2}
\frac{\beta^2-1}{\beta^2}\frac{\k'}{\k}+\frac{E'}{E}=0,
\end{equation}
that is,
$$
|\k(x)|^{\frac{\b^2-1}{\b^2}}E(x)=const.
$$
To exploit the symmetry, we now take $x$ as a space-like variable
and $y$ as a time-like variable and replace $\partial_y^2$ by
$\partial_x^2$. Now it is the interior between the lines $\mu_-=0$
and $\mu_+=0$ that gives the preferred signs $\mu_+>0$ and
$\mu_-<0$, which keep \eqref{g45} and \eqref{g46} parabolic. Hence,
similar to (i), in the case that the initial data  and the metric
$E(x)=G(x), F(x)=0,$ is periodic in $x$, then the periodic solution
will stay in the four-sided invariant region in which the initial
data lies.

\bigskip
All these arguments yield the uniform $L^\infty$ bounds for $(u^\e,
v^\e, p^\e, \rho^\e)$, which implies
$$
|(L^\e, M^\e, N^\e)|\le C,
$$
for some constant $C>0$ depending only on the data and
$\|\gamma\|_{L^\infty}$.

\bigskip
Finally, let us examine the following examples:

\begin{example}\label{5.1a}
Catenoid: $E(x)=(cosh(cx))^{\frac{2}{\beta^2-1}}, \kappa(x)=-\k_0
E(x)^{-\beta^2}$, where $c\ne 0$ and $\k_0>0$ are two constants.
Substitution them into \eqref{ode-1} (where $x$ is time-like and $y$
is space-like) yields
$$
\beta>1.
$$
Of course, \eqref{ode-1} is satisfied when
$$
E(x)=(cosh (cx))^{2(\beta^2-1)}, \quad \kappa(x)=-\k_0
E(x)^{-\frac{\beta^2}{\beta^2-1}},
$$
with $\beta>1$ (where $x$ is space-like and $y$ is time-like).
\end{example}

\begin{example}\label{5.2}
Helicoid: The metric associated with the helicoid is
$$
ds^2=E(dX)^2+(dY)^2
$$
with $E(Y)=\lambda^2+Y^2$ and the Gauss curvature
$$
\kappa=-\frac{\lambda^2}{(\lambda^2+Y^2)^2},
$$
where $\lambda>0$ is a constant. To apply our previous result, for
the special case of isothermal coordinates given in \eqref{5.ex-1},
we first make a change of variables to rewrite the helicoid metric
in isothermal coordinates, i.e., allow $X, Y$ to depend on $x,y$ so
that
$$
ds^2=(EX_x^2+Y_x^2)dx^2+2(EX_xX_y+Y_xY_y)dxdy+(EX_y^2+Y_y^2)dy^2.
$$
Hence, if we set
$$
Y_x=-\sqrt{E} X_y, \quad Y_y=\sqrt{E} X_x,
$$
then
$$
ds^2=E(X_x^2+X_y^2)(dx^2+dy^2),
$$
which gives the metric in isothermal coordinates. The above
equations for $X$ and $Y$ may be rewritten as
$$
-\frac{Y_x}{\sqrt{E}}=X_y, \quad \frac{Y_y}{\sqrt{E}}=X_x,
$$
and with
$$
\phi(Y)=\int\frac{dY}{\sqrt{\lambda^2+Y^2}}
=\ln (Y+\sqrt{\lambda^2+Y^2}),
$$
we have
$$
-\phi_x=X_y, \quad \phi_y=X_x,
$$
i.e., the Cauchy-Riemann equations. A convenient solution is given
by
$$\phi=-x, \quad X=y,$$
which yields
$$
-x=\ln (Y+\sqrt{\lambda^2+Y^2}),
$$
that is,
$$
Y=-\frac12(\lambda^2e^x-e^{-x}).
$$
Thus, in the new $(x,y)$-coordinates,
$$
E=\lambda^2+Y^2=\frac12\lambda^2+\frac14(\lambda^4e^{2x}+e^{-2x}), \quad
\kappa=\frac{-\lambda^2}{(\lambda^2+Y^2)^2} =\frac{-\lambda^2}
{\left(\frac12\lambda^2+\frac14(\lambda^4e^{2x}+e^{-2x})\right)^2},
$$
and
$$
ds^2=\big(\frac12\lambda^2+\frac14(\lambda^4e^{2x}+e^{-2x})\big)
       (dx^2+dy^2).
$$
Hence, we have
$$
-\frac{2E'(x)}{E(x)}=\frac{\kappa'(x)}{\kappa(x)}
$$
and so relation \eqref{ode-1} is satisfied with $\beta=\sqrt{2}$.
\end{example}

\begin{example}\label{5.3}
Torus: The metric for the torus is usually written as
$$
ds^2=E dX^2+b^2 dY^2,
$$
with
$$
E=(a+b\cos Y)^2, \quad \kappa(Y)=\frac{\cos Y}{b(a+b\cos Y)},
$$
where $a>b>0$ are constants. The same argument as given in Example
{\rm 5.2} above yields the metric in isothermal coordinates as
$$ds^2=E(dx^2+dy^2)$$
with
$$
E=(a+b\cos Y)^2=(a+b\cos(\phi^{-1}(x))^2),
$$
where
$$
\phi(Y)=\frac{b}{\sqrt{a^2-b^2}}\arctan
\left(\frac{\sqrt{a^2-b^2}\sin Y}{b+a\cos Y}\right),
$$
$$
\kappa(x)=\frac{\cos Y}{b(a+b\cos Y)} =\frac{\cos
(\phi^{-1}(x))}{b\big(a+b\cos (\phi^{-1}(x))\big)}.
$$
A direct
computation yields
$$
\frac{\kappa'(x)}{\kappa(x)}
=-\frac{a(\phi^{-1}(x))' \tan(\phi^{-1}(x))}{a+b\cos(\phi^{-1}(x))},
\quad \frac{E'(x)}{E(x)} =-\frac{b(\phi^{-1}(x))'
\sin(\phi^{-1}(x))}{a+b\cos(\phi^{-1}(x))},
$$
and the ratio
$$
{\frac{\kappa'(x)}{\kappa(x)}}\left/{\frac{E'(x)}{E(x)}}\right.
=\frac{a}{b\cos(\phi^{-1}(x))}
$$
is not a constant. So \eqref{ode-1} does not hold. Hence our
Proposition {\rm 5.1} will not directly apply to that piece of the
torus possessing negative Gauss curvature.
\end{example}

\medskip
\subsection{$H^{-1}_{loc}$--compactness} \label{Hloc}
We now show how the $H^{-1}_{loc}$--compactness can be achieved for
the viscous periodic approximate solutions via parabolic
regularization.

In \S 5.4, for any initial data in the four-sided region which is
periodic in $y$ with period $P$, we have a uniform $L^\infty$
estimate on $(u^\e,v^\e, p^\e, \rho^\e)$ as the periodic solution to
the viscous equations \eqref{g13}.
From the equations in \eqref{g47b},
we have
\begin{equation}\label{5.32a}
\begin{aligned}
\d_x(\r u)+ \d_y(\r v)&=\frac{v}{q^2}R_1+\frac{u}{q^2}R_2
    +\e\frac1{q}\frac{\d^2}{\d y^2}(\r q)-\e\r(\partial_y\t)^2\\
%
&=B(x,y)+\e\frac1{q}\d^2_y(\r q)-\e\r (\d_y\t)^2,
\end{aligned}
\end{equation}
where
\begin{equation*}
\begin{split}
B(x,y) &=\frac{\r}{q}\sin\t\Big(-\big(\r
q^2\sin^2\t-\frac1{\r}\big)\tilde{\G}_{22}^{(2)}
       -\r q^2\sin(2\t)\tilde{\G}_{12}^{(2)}
       -\big(\r q^2\cos^2\t-\frac1{\r}\big)\tilde{\G}_{11}^{(2)}\Big)\\
&\quad +\frac{\r}{q}\cos\t\Big(-\big(\r
q^2\sin^2\t-\frac1{\r}\big)\tilde{\G}_{22}^{(1)}
       -\r q^2\sin(2\t)\tilde{\G}_{12}^{(1)}
       -\big(\r q^2\cos^2\t-\frac1{\r}\big)\tilde{\G}_{11}^{(1)}\Big).
\end{split}
\end{equation*}
Our $L^\infty$ estimate in \S 5.4 guarantees  that $B(x,y)$ is
uniformly bounded  with respect to $\e$.

Using the periodicity, we have
\begin{eqnarray*}
&&\int_{0}^{x_1}\int_{0}^{P}\frac1{q}\d^2_y(\r q) dydx
=\int_{0}^{x_1}\int_{0}^{P}\frac{\d_y q}{q^2}\d_y(\r q) dydx\\
&&=\int_{0}^{x_1}\int_{0}^{P}\frac{\d_yq}{q^2}\left(-\r^3\d_yq
\right) dydx =-\int_{0}^{x_1}\int_{0}^{P}\frac{\r^3 (\d_y
q)^2}{q^2}dydx.
\end{eqnarray*}

Now integrating both sides of \eqref{5.32a} over $\{(x,y): \, 0\le
x\le x_1, 0\le y\le P\}$,
we find
\begin{eqnarray*}
&&\e\int_{0}^{x_1}\int_{0}^P \Big(\frac{\r^3 (\d_yq)^2}{q^2}+\r
(\d_y\t)^2\Big)dydx\\
&&= \int_{0}^{x_1}\int_0^P B(x,y)\, dydx -\int_{0}^{P}\big((\rho
u)(x_1,y)-(\rho u)(x_0,y)\big)dy\\
&&\le  C,
\end{eqnarray*}
where $C>0$ is independent of $\e$, but may depend on $x_1$ and $P$.
This implies that
$$
\sqrt{\e}\d_y\t, \, \sqrt{\e}\d_y q \qquad\text{are in
$L^2_{loc}(\O)$ uniformly in $\e$}.
$$

Therefore, we have
\begin{proposition} \label{H-1}
{\rm (i)} Consider the viscous system \eqref{g13} in
$\Omega=\{(x,y): \, x\ge 0, y\in\R\}$ with periodic initial data
$(q,\theta)|_{x=0}=(q_0(y),\theta_0(y))$,
then
$$
\sqrt{\e}\d_y q, \; \sqrt{\e}\t_y \qquad\text{are in $L^2_{loc}(\O)$
uniformly in $\e$};
$$
{\rm (ii)} If we replace $\d_y^2$ in system \eqref{g13} by $\d^2_x$,
the initial data $(q,\theta)|_{x=0}=(q_0(y),\theta_0(y))$ by
\begin{equation}\label{initial-x}
(q,\theta)|_{y=0}=(q_0(x),\theta_0(x))
\end{equation}
which is periodic in $x$ with period $P$ and, in addition, we assume
that the metric $E(x)=G(x)$ is also periodic with period $P$,
then the periodic solution with period $P$ satisfies that
$$
\sqrt{\e}\d_x q, \; \sqrt{\e}\d_x\t \qquad\text{are in
$L^2_{loc}(\O)$ uniformly in $\e$},
$$
where $\Omega=\{(x,y): \, x_0\le x <x_1, y> 0\}$.
\end{proposition}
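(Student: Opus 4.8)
The plan is to read off an $\e$-weighted energy estimate directly from the viscous continuity equation \eqref{5.32a}, using periodicity in the space-like variable to annihilate the boundary contributions and the uniform $L^\infty$ control of \S 5.4 to dominate everything else independently of $\e$. First I would record precisely what that $L^\infty$ bound provides. The four-sided invariant region constructed in \S 5.4 is pinched between the level sets of $W_\pm$ through $(\t,q)=(0,\a)$ and $(0,\b)$ with $1<\a<\b$; since $W_+-W_-=2\arccos(1/q)$, the region forces $\a\le q\le\b$ throughout. Consequently $q$ is bounded away from the sonic value $q=1$, and $\r=1/\sqrt{q^2-1}$ together with $1/\r$ stay bounded above and below, all uniformly in $\e$. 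In particular the source $B(x,y)$ in \eqref{5.32a}, being a smooth function of $(q,\t)$ and the fixed Christoffel symbols, is uniformly bounded, as are the boundary traces $(\r u)(0,\cdot)$ and $(\r u)(x_1,\cdot)$.

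Next I would integrate \eqref{5.32a} over the rectangle $\{0\le x\le x_1,\ 0\le y\le P\}$. The divergence term $\d_y(\r v)$ integrates to zero by $P$-periodicity in $y$, while $\d_x(\r u)$ contributes only the two bounded boundary traces at $x=0$ and $x=x_1$. For the viscous term $\e\frac1{q}\d_y^2(\r q)$ I would integrate by parts once in $y$; periodicity again kills the boundary term, leaving $\e\int\frac{\d_y q}{q^2}\d_y(\r q)$. Here the Bernoulli relation \eqref{g6a} is the decisive input: differentiating $\r=(q^2-1)^{-1/2}$ gives $\d_y(\r q)=-\r^3\d_y q$, so this term collapses to $-\e\int\frac{\r^3(\d_y q)^2}{q^2}$. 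Collecting everything yields the identity
$$\e\int_0^{x_1}\!\!\int_0^P\Big(\frac{\r^3(\d_y q)^2}{q^2}+\r(\d_y\t)^2\Big)\,dy\,dx = \int_0^{x_1}\!\!\int_0^P B\,dy\,dx - \int_0^P\big((\r u)(x_1,y)-(\r u)(0,y)\big)\,dy \le C,$$
with $C$ independent of $\e$. Since $\r$ and $q$ are bounded above and below, the left side controls $\e\|\d_y q\|_{L^2}^2+\e\|\d_y\t\|_{L^2}^2$, which is exactly the claim that $\sqrt{\e}\,\d_y q$ and $\sqrt{\e}\,\d_y\t$ are uniformly bounded in $L^2_{loc}(\O)$, proving (i).

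For (ii) I would invoke the alternative symmetry about $\t=\frac{\pi}2$ identified at the end of \S 5.4: setting $\t=\psi+\frac{\pi}2$ and interchanging the roles of $x$ (now time-like) and $y$ (now space-like), the system \eqref{g13} with $\d_y^2$ replaced by $\d_x^2$ admits a four-sided invariant region in the interior between $\mu_-=0$ and $\mu_+=0$, provided the metric $E(x)=G(x)$ and the data are $P$-periodic in $x$; this supplies the same uniform $L^\infty$ bound. Repeating the integration over $\{x_0\le x<x_1,\ 0\le y\le P\}$ (periodicity now used in $x$, and of the metric) reproduces the energy identity with $x$ and $y$ exchanged, giving $\sqrt{\e}\,\d_x q,\ \sqrt{\e}\,\d_x\t\in L^2_{loc}(\O)$ uniformly.

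The main obstacle I anticipate is not any single estimate but the simultaneous bookkeeping: one must verify that the invariant region genuinely stays bounded away from the degenerate set $q=1$ (so that neither $\r$ nor the factors $\frac1{q\sqrt{q^2-1}}$ pervading the computation blow up), that periodicity is preserved by the parabolic flow (so that every boundary integral in $y$, respectively $x$, truly vanishes), and that $B$ and the two surviving traces are all bounded uniformly in $\e$ at once. Once the $L^\infty$ bound of \S 5.4 is secured with a margin $q\ge\a>1$, the remainder reduces to the single integration by parts displayed above, and the estimate follows.
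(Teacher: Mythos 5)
Your proposal is correct and follows essentially the same route as the paper: integrate the viscous continuity equation \eqref{5.32a} over a period rectangle, use periodicity plus one integration by parts and the Bernoulli identity $\d_y(\r q)=-\r^3\d_y q$ to collapse the viscous term into the coercive quantity $\e\int\big(\tfrac{\r^3(\d_y q)^2}{q^2}+\r(\d_y\t)^2\big)$, and bound the right side by the uniformly bounded source $B$ and the two boundary traces of $\r u$, with the symmetric argument (roles of $x$ and $y$ exchanged, metric periodic) for part (ii). Your explicit remark that the diamond-shaped invariant region pins $\a\le q\le\b$, keeping $\r$ and $1/\r$ uniformly controlled away from the sonic value $q=1$, is a detail the paper leaves implicit but is exactly the justification needed.
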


Using Proposition \ref{H-1} and the viscous system \eqref{g13}, we
conclude that
$$
\d_x\tilde{M}^\e-\d_y\tilde{L}^\e, \quad \d_x\tilde{N}^\e-\d_y
\tilde{M}^\e \quad\text{are compact in } H^{-1}_{loc}(\O).
$$

Since $\gamma\in C^{1}$, we conclude that $(L^\e, M^\e,
N^\e)=\gamma(\tilde{L}^\e, \tilde{M}^\e, \tilde{N}^\e)$ satisfies
Framework (A) in \S 4. Then the compensated compactness framework
(Theorem 4.1) implies that there is a subsequence (still labeled)
$(L^\varepsilon, M^\varepsilon, N^\varepsilon)(x,y)$ that converges
weak-star to $(\bar{L}, \bar{M}, \bar{N})$ as $\varepsilon\to 0$
such that the limit $(\bar{L}, \bar{M}, \bar{N})$ is a bounded,
periodic weak solution to the Gauss-Codazzi system
\eqref{g1}--\eqref{g2}.
%
%
%
Therefore, $(\bar{L},\bar{M},\bar{N})$ is a weak solution of
\eqref{g1}--\eqref{g2}. We summarize this as Proposition 5.2.

\begin{proposition}\label{H-2}
For either initial value problem {\rm (i)} or {\rm (ii)} of
Proposition {\rm \ref{H-1}}, $(L^\e, M^\e, N^\e)$ possesses a
weak-star convergent subsequence which converges to a periodic weak
solution of the associated initial value problem for the
Gauss-Codazzi system \eqref{g1}--\eqref{g2} when $\e\to 0$.
\end{proposition}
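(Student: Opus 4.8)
The plan is to show that the scaled viscous approximate solutions fit Framework (A) of \S4 and then apply Theorem \ref{T} directly; the periodic weak solution of \eqref{g1}--\eqref{g2} is then the weak-star limit furnished by that theorem. For each fixed $\e>0$ I would first take the global (in the time-like variable) periodic solution $(q^\e,\t^\e)$ of the regularized system \eqref{g13} obtained by combining local parabolic solvability with the a priori $L^\infty$ bound of \S5.4 to continue the solution for all time. The invariant-region analysis of \S5.4 keeps $(q^\e,\t^\e)$ inside the four-sided diamond bounded by the level sets $W_\pm=C_\pm$; since this region lies in $\{q\ge\a\}$ with $\a>1$, the algebraic relations \eqref{g6a}--\eqref{g7a} show that $\r^\e,p^\e,u^\e,v^\e$ are all uniformly bounded, and hence so is $(\tilde L^\e,\tilde M^\e,\tilde N^\e)$. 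Because $\g\in C^1$ is bounded, $(L^\e,M^\e,N^\e)=\g(\tilde L^\e,\tilde M^\e,\tilde N^\e)$ satisfies (A.1).

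For (A.3) I would note that the construction enforces the constraint exactly: the Bernoulli relation gives $\tilde L^\e\tilde N^\e-(\tilde M^\e)^2=-1$, so $L^\e N^\e-(M^\e)^2=-\g^2=\k$ with $o^\e_3(1)\equiv 0$. Rewriting \eqref{g13} in the $(\tilde L^\e,\tilde M^\e,\tilde N^\e)$ variables reproduces the Gauss--Codazzi equations \eqref{g1} with the error terms $o^\e_1(1),o^\e_2(1)$ being exactly the viscous contributions $-\e\d_y^2(\r^\e v^\e)$ and $-\e\d_y^2(\r^\e u^\e)$ (or their $\d_x^2$ analogues in case (ii)), which I verify below tend to zero.

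The crux is (A.2). I would write each curl, say $\d_x\tilde M^\e-\d_y\tilde L^\e$, as the sum of a zeroth-order term (the tilde-Christoffel combination $\tilde\G^{(2)}_{22}\tilde L^\e-2\tilde\G^{(2)}_{12}\tilde M^\e+\tilde\G^{(2)}_{11}\tilde N^\e$ coming from \eqref{g1}) and the viscous term. The zeroth-order term is bounded in $L^\infty_{loc}\subset L^2_{loc}$, hence precompact in $H^{-1}_{loc}$ by the compact embedding $L^2\hookrightarrow H^{-1}$ on bounded subdomains. For the viscous term I would factor $\e\d_y^2(\r^\e v^\e)=\d_y\big(\sqrt\e\,[\sqrt\e\,\d_y(\r^\e v^\e)]\big)$; Proposition \ref{H-1} gives $\sqrt\e\,\d_y q^\e,\sqrt\e\,\d_y\t^\e$ bounded in $L^2_{loc}$, so $\sqrt\e\,\d_y(\r^\e v^\e)$ is bounded in $L^2_{loc}$ (since $\r^\e v^\e$ is a smooth bounded function of $(q^\e,\t^\e)$ on the invariant region), whence $\sqrt\e\cdot[\sqrt\e\,\d_y(\r^\e v^\e)]\to 0$ in $L^2_{loc}$ and its $y$-derivative $\to 0$ in $H^{-1}_{loc}$. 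Thus each curl is compact in $H^{-1}_{loc}$, giving (A.2); case (ii) is identical after interchanging $x$ and $y$ and using the $x$-estimates of Proposition \ref{H-1}.

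With (A.1)--(A.3) verified, Theorem \ref{T} yields a weak-star convergent subsequence of $(L^\e,M^\e,N^\e)$ whose limit $(\bar L,\bar M,\bar N)$ is a bounded weak solution of \eqref{g1}--\eqref{g2}; periodicity passes to the limit because every member of the sequence shares the common period $P$ and this closed linear constraint is preserved under weak-star convergence. The main obstacle I anticipate is (A.2): one must ensure the decomposition genuinely separates an $H^{-1}_{loc}$-precompact part from a vanishing part, and that the energy-dissipation estimate of Proposition \ref{H-1} — which itself rests on the periodicity and on the uniform bound of $B(x,y)$ from \S5.4 — controls $\sqrt\e\,\d_y(\r^\e v^\e)$ and not merely $\sqrt\e\,\d_y q^\e$. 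This is precisely where the smoothness of the maps $(q,\t)\mapsto(\r,u,v,p)$ on the invariant region, guaranteed by $q\ge\a>1$, is essential.
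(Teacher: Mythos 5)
Your proposal is correct and follows essentially the same route as the paper: the paper likewise verifies Framework (A) for $(L^\e,M^\e,N^\e)=\g(\tilde L^\e,\tilde M^\e,\tilde N^\e)$ using the invariant-region $L^\infty$ bounds of \S 5.4 and the dissipation estimate of Proposition \ref{H-1}, and then invokes Theorem \ref{T}. Your explicit decomposition of the curls into an $L^2_{loc}$-bounded (hence $H^{-1}_{loc}$-precompact) zeroth-order part plus a viscous part $\d_y\bigl(\sqrt{\e}\,[\sqrt{\e}\,\d_y(\r^\e v^\e)]\bigr)\to 0$ in $H^{-1}_{loc}$ simply spells out the step the paper states in one line, so there is no substantive difference.
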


\subsection{Existence of isometric immersions: Main theorem and examples}

We now focus on the case \eqref{5.ex-1}:
$$
F=0, \,\, \text{and}\,\, E=G \text{ depends only on } x.
$$
to state an existence result for isometric immersions and analyze
examples for this case.

Let us look for a special solution:
$$
(\t, q)=(0, \beta)   \text{ (constant state)},
$$
for the Gauss-Codazzi system for the case \eqref{5.ex-1}. In this
case,
$$
 \tilde{\G}_{11}^{(1)}=\frac{E'}{2E}+\frac{\g'}{\g},\quad
\tilde{\G}_{12}^{(1)}=0,\quad
 \tilde{\G}_{22}^{(1)}=-\frac{E'}{2E}; \quad
 \tilde{\G}_{11}^{(2)}=0, \quad
  \tilde{\G}_{12}^{(2)}=\frac{E'}{2E}+\frac{\g'}{2\g},\quad
\tilde{\G}_{22}^{(2)}=0,
$$
and the Gauss-Codazzi system \eqref{g3a} becomes
$$
\partial_x\big(\frac1{\r}\big)=- p\frac{E'}{2E}+(\rho
q^2+p)(\frac{E'}{2E}+\frac{\g'}{\g}) =\rho q^2\frac{E'}{2E} +(\rho
q^2+p)\frac{\g'}{\g}
 =\rho
q^2\frac{E'}{2E} +\rho\frac{\g'}{\g},
%
\quad
  \r=\frac{1}{\sqrt{q^2-1}}.
$$
When $q(x)\equiv \b$, this reduces to
$$
\frac{1}{\b^2}\frac{\g'}{\g}=-\frac{E'}{2E},
$$
or \begin{equation} \label{ourcase}
\frac{1}{\b^2}\frac{\k'(x)}{\k(x)}=-\frac{E'}{E}.
\end{equation}
Hence, $q(x)=\b$ becomes an exact solution precisely in this special
case.
Our theorem given below shows that, in fact, we can satisfy the
prescribed initial conditions in this special case and that, for
this choice of $E, F, G$, there exists a weak solution for arbitrary
bounded data in our diamond-shaped region when $\a\in (1,\beta)$
(see Fig. 3).

%

\medskip
Consider the initial value problem for the Gauss-Codazzi system
\eqref{g1}--\eqref{g2} with initial data
\begin{equation}\label{6.1}
(q,\theta)|_{x=0}=(q_0(y), \theta_0(y)), \qquad y\in\R,
\end{equation}
or
\begin{equation}\label{6.2}
(q,\theta)|_{y=0}=(q_0(x), \theta_0(x)), \qquad x\in\R.
\end{equation}

Our next result shows that, for this choice of $E, F, G$, there
exists a weak solution for arbitrary bounded initial data in our
diamond-shaped region  when $\a\in (1, \beta)$
(see Fig. 3).

\begin{theorem} Assume that the initial data \eqref{6.1}, or \eqref{6.2}, is
$L^\infty$ and lies in the diamond-shaped region of Figs. {\rm
3--4}. Then

{\rm (i)} The Gauss-Codazzi system \eqref{g1}--\eqref{g2} has a weak
solution with the initial data $(q, \t)|_{x=0}=(q_0(y), \t_0(y))$.
This case includes Example {\rm 5.1} for the catenoid with the
metric $E(x)=G(x)=(cosh(cx))^{\frac{2}{\beta^2-1}}$, $F(x)=0$, $c\ne
0$, and $\beta>1$, and Example {\rm 5.2} for the helicoid (in
isothermal coordinates) with
$E(x)=G(x)=\frac{1}{2}\lambda^2+\frac{1}{4}(\lambda^4e^{2x}+e^{-2x})$,
$F(x)=0$, and $\beta=\sqrt{2}$.
%

{\rm (ii)} The Gauss-Codazzi system \eqref{g1}--\eqref{g2} has a
weak solution with the initial data
$(q,\t)|_{y=0}=(q_0(x),\t_0(x))$. This case includes the catenoid
with the metric $E(x)=G(x)=(cosh(cx))^{2(\beta^2-1)}$, $F(x)=0$,
$c\ne 0$, and $\beta>1$, and the helicoid (in isothermal
coordinates) with
$E(x)=G(x)=\frac{1}{2}\lambda^2+\frac{1}{4}(\lambda^4e^{2x}+e^{-2x})$,
$F(x)=0$, and $\beta=\sqrt{2}$.
%
\end{theorem}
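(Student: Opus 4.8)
The plan is to obtain $(\bar L,\bar M,\bar N)$ as the weak-star limit of the viscous approximate solutions of the parabolic regularization \eqref{g13} and to feed these into the compensated compactness framework of Theorem \ref{T}. First I would fix the geometry: in the special case \eqref{5.ex-1}, with $F=0$ and $E=G$ a function of $x$ alone, the constant state $(\t,q)=(0,\b)$ is an exact solution of the Gauss-Codazzi system precisely when the metric obeys the compatibility relation \eqref{ourcase} (equivalently \eqref{ode-1}) for part (i), where $x$ is time-like and $y$ space-like, and the symmetric relation \eqref{ode-2} for part (ii), where the roles of $x$ and $y$ are exchanged and $\d_y^2$ is replaced by $\d_x^2$. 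For the catenoid and the helicoid in isothermal coordinates these relations hold with $\b>1$ and $\b=\sqrt2$, respectively, as verified by direct differentiation in Examples \ref{5.1a} and \ref{5.2}; consequently the curves $\t_\pm(q)$ that annihilate the right-hand side of \eqref{s1} are independent of $(x,y)$ and the diamond-shaped region of Figs. \ref{f3}--\ref{f4} is well defined.

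With the geometry fixed, the main a priori estimates follow the program developed above. Choosing $\a\in(1,\b)$ sufficiently close to $\b$, the four-sided region cut out by the level curves $W_\pm=W_\pm(0,\a)$ and $W_\pm=W_\pm(0,\b)$ sits inside the zone $\l_+>0,\ \l_-<0$ (respectively $\mu_+>0,\ \mu_-<0$ for part (ii)) in which system \eqref{g45}--\eqref{g46} is parabolic. The sign analysis of $\e\g\,\d_y^2 W_\pm$ at interior critical points of $W_\pm$ together with the parabolic maximum principle shows this diamond is an invariant region, so that any viscous solution with data \eqref{6.1} (respectively \eqref{6.2}) lying in it stays in it for all later time; this yields the uniform bound $|(L^\e,M^\e,N^\e)|\le C$, with $C$ depending only on the data and $\|\g\|_{L^\infty}$, and also guarantees, via standard parabolic continuation, the global existence in the time-like variable of the viscous solutions themselves. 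The $H^{-1}_{loc}$-compactness is then the content of Proposition \ref{H-1}: integrating the continuity relation \eqref{5.32a} over a space-time rectangle and using periodicity produces the uniform dissipation bound $\sqrt\e\,\d_y q,\ \sqrt\e\,\d_y\t\in L^2_{loc}(\O)$ (respectively the $x$-derivatives in part (ii)), which with the $L^\infty$ bound confines $\d_x\tilde M^\e-\d_y\tilde L^\e$ and $\d_x\tilde N^\e-\d_y\tilde M^\e$ to a compact subset of $H^{-1}_{loc}(\O)$, as shown in \S\ref{Hloc}.

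These two estimates verify Framework (A) for $(L^\e,M^\e,N^\e)=\g(\tilde L^\e,\tilde M^\e,\tilde N^\e)$: (A.1) is the $L^\infty$ bound, (A.2) is the $H^{-1}_{loc}$-compactness just established, and (A.3) holds with the viscous residuals $\tilde R_1,\tilde R_2$ in the role of $o^\e_j(1)$, since they carry a factor $\e$ against the $L^2_{loc}$-bounded quantities $\sqrt\e\,\d_y(\r q)$ and $\sqrt\e\,\d_y\t$ and hence vanish in the distributional sense as $\e\to 0$. Theorem \ref{T} then furnishes a weak-star convergent subsequence whose limit $(\bar L,\bar M,\bar N)$ is a bounded weak solution of the Gauss-Codazzi system \eqref{g1}--\eqref{g2}, and Mardare's form of the fundamental theorem of surface theory upgrades this to a $C^{1,1}$ isometric immersion realizing the prescribed metric. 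In the periodic case this is exactly Proposition \ref{H-2}.

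The step I expect to be most delicate is to pass from the periodic conclusion of Proposition \ref{H-2} to arbitrary $L^\infty$ data on the whole line. For this I would approximate the given data by periodic data with period $P_k\to\infty$, each still confined to the same diamond, and exploit the fact that both the invariant-region bound and the energy estimate of Proposition \ref{H-1} are local: on a fixed rectangle $\{0\le x\le x_1,\ |y|\le L\}$ their constants depend on the diamond and on $x_1,L$ but not on $P_k$ nor on the oscillation of the data. A diagonal argument over an exhausting family of such rectangles then yields a bounded weak solution on all of $\O$. The care required is precisely to make the $L^2_{loc}$ dissipation bound uniform in $k$ on each fixed rectangle as $P_k\to\infty$; once this is secured, Framework (A) holds locally and Theorem \ref{T} applies on every rectangle, giving a global weak solution. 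Specializing to the catenoid and helicoid metrics identified in the first paragraph then closes both parts (i) and (ii).
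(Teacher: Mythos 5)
Your pipeline (periodize the data, solve the viscous system \eqref{g13}, invariant-region $L^\infty$ bound, dissipation estimate, Framework (A), then $\e\to0$ and $P\to\infty$) is the paper's, but there are two concrete gaps. First, in the final $P\to\infty$ step you attempt a diagonal argument whose hinge is that the dissipation bound $\sqrt\e\,\d_y q,\ \sqrt\e\,\d_y\t\in L^2_{loc}$ can be made ``uniform in $k$ on each fixed rectangle,'' and you flag this yourself as unresolved. As derived in \S 5.5 that estimate is \emph{not} local in $y$: the integration by parts is performed over a full period, the boundary terms vanish only by periodicity, and the constant $C$ explicitly depends on $P$; on a sub-rectangle $|y|\le L<P/2$ the flux terms $\e\,\d_y(\r q)$ at $y=\pm L$ are uncontrolled, so your locality claim would require a cutoff-function version of the energy estimate that you do not supply. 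The paper sidesteps this entirely by ordering the limits: first $\e\to0$ at fixed $P$ (Proposition \ref{H-2}) yields \emph{exact} periodic weak solutions $(L^P,M^P,N^P)$; for these the Codazzi equations \eqref{g1} hold exactly, so $\d_x\tilde M^P-\d_y\tilde L^P$ and $\d_x\tilde N^P-\d_y\tilde M^P$ coincide with fixed linear combinations of $(\tilde L^P,\tilde M^P,\tilde N^P)$ with bounded coefficients, hence are uniformly bounded in $L^\infty_{loc}$ and therefore confined to a compact set of $H^{-1}_{loc}$ with no dissipation estimate at all; Theorem \ref{T} then applies directly as $P\to\infty$. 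Your uniformity worry is thus not merely delicate but unnecessary once the limits are taken in this order.

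Second, for part (ii) exchanging the roles of $x$ and $y$ is not enough: the coefficients $\tilde\G^{(k)}_{ij}$ depend on $x$ through $E(x)$ and $\g(x)$, and with $y$ time-like an $x$-periodic viscous solution exists only if the equations themselves are $x$-periodic. The paper's Step 4 therefore periodizes the metric as well as the data: truncate $a(x)=E'(x)/E(x)$, extend oddly so that the mollified $a^P$ has zero mean per period, recover $E^P$ and $\kappa^P$ from the ODEs $Y'=a^P Y$ and $K'=-\frac{\b^2}{\b^2-1}a^P K$, solve the periodic problem with $E^P=G^P,\kappa^P$, and use the strong convergence $E^P\to E$ when sending $P\to\infty$. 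Your proposal omits this step entirely. A smaller omission in the same spirit: to guarantee that the periodized, mollified data stay in the diamond, the paper mollifies the Riemann invariants $(W_{-,0},W_{+,0})$, for which the diamond is a box preserved by averaging; mollifying $(q_0,\t_0)$ directly need not preserve the region.
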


\begin{proof} We start with case (i).

\smallskip
{\it Step 1.} For the initial data $(q_0(y), \theta_0(y))$, we can
find $(q_0^P(y), \theta_0^P(y))$ for $P>0$ such that

\smallskip
(i) \, $q^P_0, \theta^P_0\in C^1(\R), \quad q_0^P,\theta^P
\,\,\mbox{are periodic with period $P$}$;


(ii) \, $ q_0^P\to q_0, \theta_0^P\to \theta_0 \quad a.e.\,\,
\mbox{in $\R$ and weakly in $L^\infty(\R)$ as $P\to \infty$}$.

\smallskip
\noindent In particular, the functions $q_0^P$ and $\theta_0^P$ are
bounded in $L^\infty(\R)$, and $(q_0^P, \theta_0^P)$ converges to
$(q_0, \theta_0)$ in $L^p_{loc}(\R)$, $p\in [1, \infty)$, as $P\to
\infty$.

This can be achieved by the standard symmetric mollification
procedure: First truncate the initial data $(W_{-,0}, W_{+,0})=
(W_-(q_0,\theta_0), W_+(q_0,\theta_0))$ in the interval
$-\frac{P}{2}\le y\le \frac{P}{2}$ and make the periodic extension
to the whole space $y\in\R$, and then take the standard symmetric
mollification approximation to get the $C^\infty$ approximate
sequence $(W_{-,0}^P, W_{+,0}^P)$ that yields the corresponding
$C^\infty$ approximate sequence
$$
(q_0^P, \theta_0^P)=((\cos(\frac{W_{+,0}^P-W_{-,0}^P}{2}))^{-1},
\frac{W_{+,0}^P+W_{-,0}^P}{2})
$$
converging to $(q_0,\theta_0)=(
(\cos(\frac{W_{+,0}-W_{-,0}}{2}))^{-1}, \frac{W_{+,0}+W_{-,0}}{2})$
a.e. in $\R$ as $P\to \infty$. Since the standard symmetric
mollification is an average-smoothing operator,
then the approximate sequence
$(q_0^P, \theta^P_0)$ still lies in our diamond-shaped region of
Figs. 3--4.

\medskip
{\it Step 2}. Following the arguments in Section 5.4, we can
established the uniform $L^\infty$ apriori estimates for the
corresponding viscous solutions in two parameters $\epsilon>0$ and
$P>0$. For fixed $P$, then we can show that there exists a unique
periodic viscous solution with period $P$ to the parabolic system
\eqref{g13}, which can be achieved by combining the standard local
existence theorem with the $L^\infty$ estimates. The
$H^{-1}_{loc}$--compactness follows from the argument in \S
\ref{Hloc}.

For fixed $P$, letting the viscous coefficient $\epsilon$ tend $0$,
we employ Proposition 5.2 to obtain the global periodic weak
solution $(L^P, M^P, N^P)$ of the Gauss-Codazzi system
\eqref{g1}--\eqref{g2}, periodic in $y$ with period $P$, in the half
plane $\{(x,y)\,:\, x\ge 0, y\in \R\}$.

\medskip
{\it Step 3}. Since the sequence of periodic solutions $(L^P, M^P,
N^P)$ still stays in the invariant region, which yields the uniform
$L^\infty$ bound in $P$ as $P\to\infty$. This uniform bound also
yields the $H^{-1}_{loc}$--compactness of
$$
\d_x\tilde{M}^P-\d_y\tilde{L}^P, \quad \d_x\tilde{N}^P-\d_y
\tilde{M}^P.
$$

Using the compensated compactness framework (Theorem 4.1) again and
letting $P\to \infty$, we obtain a global weak solution $(L, M, N)$
 of the Gauss-Codazzi system \eqref{g1}--\eqref{g2} in the half
plane $\{(x,y)\,:\, x\ge 0, y\in \R\}$.

\medskip
{\it Step 4}. For (ii), it also requires the periodic approximation
for the metric $E(x)=G(x)$ by $E^P(x)=G^P(x)$ with period $P$ in
$x$, besides the period approximation for the initial data. This can
be achieved as follows: First truncate the function
$a(x):=\frac{E'(x)}{E(x)}$ in the interval $-\frac{P}{4}\le x\le
\frac{P}{4}$ and make odd extension along the lines $x=-\frac{P}{4},
\frac{P}{4}$ respectively. Then make the periodic extension from the
interval $-\frac{P}{2}\le x\le \frac{P}{2}$ to the whole space
$x\in\R$ with period $P$ and take the standard symmetric
mollification approximation to get the $C^\infty$ approximate
sequence $a^P(x)$ with zero mean over each period  (i.e.
$\int_{-\frac{P}{2}}^{\frac{P}{2}}a^P(x)dx=0$) that yields the
corresponding $C^\infty$ approximate sequence $E^P(x)$ with period
$P$ uniquely determined by the differential equation:
$$
Y'(x)=a^P(x) Y(x), \qquad Y|_{x=0}=E(0),
$$
and then take the $C^\infty$ approximate sequence $\kappa^P(x)$ with
period $P$  determined by the differential equation:
$$
K'(x)=-\frac{\beta^2}{\beta^2-1}a^P(x) K(x), \qquad
K|_{x=0}=\kappa(0)
$$
for the corresponding $\beta>1$ if needed.

Substitute $E^P=G^P$ and $\kappa^P$ into the viscous Gauss-Codazzi
system so that the equations in the system is periodic in $x$ with
period $P$. For fixed $P$, the same argument as that for the first
case yields the global period weak solution $(L^P, M^P, N^P)$ of the
Gauss-Codazzi system \eqref{g1}--\eqref{g2} (with $E(x)$ replaced by
$E^P(x)$, periodic in $x$ with period $P$, in the half plane
$\{(x,y)\,:\, x\in \R, y\ge 0\}$. Then, using the same argument,
noting the strong convergence of $E^P$ to $E$, and letting $P\to
\infty$, we again obtain a global weak solution $(L, M, N)$
 of the Gauss-Codazzi system \eqref{g1}--\eqref{g2} in the half
plane $\{(x,y)\,:\, x\in \R, y\ge 0\}$.
\end{proof}

If we repeat the similar argument for Theorem 5.1 through the
corresponding vanishing viscosity method on the domain
$\Omega=\{(x,y)\,:\, x< 0, y\in \R\}$ for problem \eqref{6.1} and
the domain $\Omega=\{(x,y)\,:\, x\in\R, y< 0\}$ for problem
\eqref{6.2}, we obtain again a weak solution. Together, they form a
weak solution in $\R^2$. As before, the associated immersion is in
$C^{1,1}$.

\begin{theorem}
Assume that the initial data \eqref{6.1}, or \eqref{6.2}, is
$L^\infty$ and lies in the diamond-shaped region of Figs. {\rm 3--4}
for the case of the catenoid or helicoid metric (as given in Theorem
{\rm 5.1}), then the initial value problem \eqref{g1}--\eqref{g2}
and \eqref{6.1}, or \eqref{6.2}, has a weak solution in
$L^\infty(\R^2)$. This yields a $C^{1,1}(\R^2)$ immersion of the
Riemannian manifold into $\R^3$.
\end{theorem}

\begin{remark}
The catenoid with circular cross-section is sketched in Fig. 5. Our
theorem asserts the existence of a $C^{1,1}$-surface for the
associated metric for a class of non-circular cross-sections
prescribed at $x=0$. Similarly, the $C^{1,1}$-helicoid in the
$(x,y,z)$--coordinates is sketched in Fig. 6.
\end{remark}

\begin{figure}[h]
\centering
\includegraphics[width=2in]{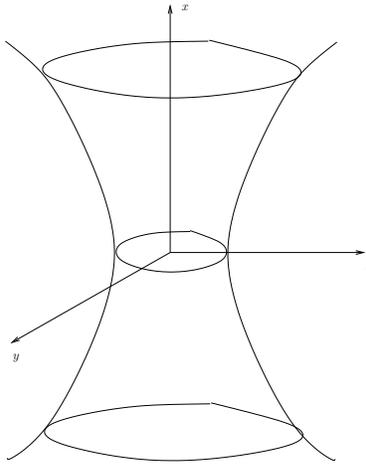}  
\caption{$C^{1,1}$-catenoid in the $(x,y,z)$--coordinates}
 \label{f5}
\end{figure}

\begin{figure}[h]
\centering
\includegraphics[width=2in]{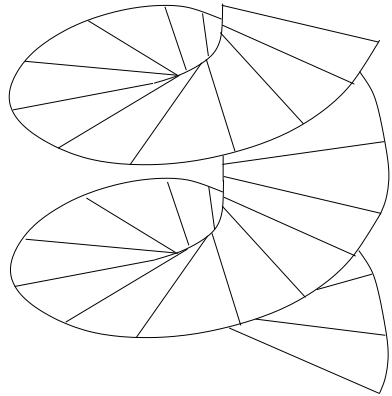}  
\caption{$C^{1,1}$-helicoid in the $(x,y,z)$--coordinates}
 \label{f6}
\end{figure}

\bigskip
{\bf Acknowledgments.} This paper was completed when the authors
attended the ``Workshop on Nonlinear PDEs of Mixed Type Arising in
Mechanics and Geometry'', which was held at the American Institute
of Mathematics, Palo Alto, California, March 17--21, 2008. Gui-Qiang
Chen's research was supported in part by the National Science
Foundation under Grants DMS-0807551, DMS-0720925, DMS-0505473, and
an Alexander von Humboldt Foundation Fellowship. Marshall Slemrod's
research was supported in part by the National Science Foundation
under Grant DMS-0243722. Dehua Wang's research was supported in part
by the National Science Foundation under Grants DMS-0244487,
DMS-0604362, and the Office of Naval Research Grant
N00014-01-1-0446.

\bigskip\bigskip

\end{document}